\newenvironment{dedication}
  {
   \vspace*{.3cm}
   \itshape             
   \raggedleft          
  }
  {\par 
   \vspace*{.3cm} 
  }
\definecolor{lightgrey}{rgb}{.804,.804,.756}
\definecolor{myred}{rgb}{.545,0,0}
\definecolor{myblue}{rgb}{.024,.15,.645}    
\definecolor{mygreen}{rgb}{0,.455,0} 
\definecolor{myviolet}{rgb}{.45,.05,.545}       
\definecolor{lightgrey}{rgb}{.8,.8,.8}
\newcommand{\N}{\mathbb{N}}
\newcommand{\Z}{\mathbb{Z}}
\newcommand{\id}{\operatorname{Id}}
\newcommand{\Id}{\operatorname{Id}}
\newcommand{\Refl}{\operatorname{Refl}}
\newcommand{\Strange}{\operatorname{Strange}}
\newcommand\op{\mathrel{\triangleleft}}
\newcommand\wop{\mathrel{\widetilde{\triangleleft}}}
\newcommand\lop{\mathrel{\triangleright}}
\newcommand\wlop{\mathrel{\widetilde{\triangleright}}}
\newcommand\kop{\mathrel{\blacktriangleleft_k}}
\newcommand\klop{\mathrel{\blacktriangleright_k}}
	\theoremstyle{plain}
\newtheorem{thm}{Theorem}[section]
\newtheorem{lem}[thm]{Lemma}
\newtheorem{cor}[thm]{Corollary}
\newtheorem{pro}[thm]{Proposition}
\newtheorem*{conjecture*}{Conjecture}
	\theoremstyle{definition}
\newtheorem{defn}[thm]{Definition}
\newtheorem{exa}[thm]{Example}
	\theoremstyle{remark}
\newtheorem{rem}[thm]{Remark}
\setlist{nolistsep}
\begin{document}

\title[]{Reflection equation as a tool for studying solutions to the Yang--Baxter equation}

\begin{abstract}
Given a right-non-degenerate set-theoretic solution $(X,r)$ to the Yang--Baxter equation, we construct a whole family of YBE solutions $r^{(k)}$ on $X$ indexed by  its reflections $k$ (i.e., solutions to the reflection equation for $r$). This family includes the original solution and the classical derived solution. All these solutions induce isomorphic actions of the braid group/monoid on $X^n$. The structure monoids 
of $r$ and $r^{(k)}$ are related by an explicit bijective $1$-cocycle-like map. We thus turn reflections into a tool for studying YBE solutions, rather than a side object of study. In a different direction, we study the reflection equation for non-degenerate involutive YBE solutions, show it to be equivalent to (any of the) three simpler relations, and deduce from the latter systematic ways of constructing new reflections.
\end{abstract}

\keywords{Yang--Baxter equation, reflection equation, structure monoid, structure shelf, braid group}

\subjclass[2010]{
 16T25, 
 20N02, 
 20F36. 
 }

\author{Victoria Lebed}
\address{ LMNO, Universit\'e de Caen--Normandie, BP 5186, 14032 Caen Cedex, France}
\email{lebed@unicaen.fr}
  
\author{Leandro Vendramin}
\address{Department of Mathematics, Vrije Universiteit Brussel, Pleinlaan 2, 1050 Brussel; and 
Departamento de Matem\'atica -- FCEN,
Universidad de Buenos Aires, Pab. I -- Ciudad Universitaria (1428),
Buenos Aires, Argentina}
\email{lvendramin@dm.uba.ar}

\maketitle

\begin{dedication}
To the memory of Patrick Dehornoy,\\ a connoisseur of strange structures  
\end{dedication} 

\section*{Introduction}

Let $X$ be a set. A map $r \colon X \times X \to X \times X$ will be called a \emph{solution} if it is a (set-theoretic) solution to the \emph{Yang--Baxter equation (YBE)} on $X^3$:
\begin{align}
r_1r_2r_1 &=r_2r_1r_2. \label{E:YBE}
\end{align}
Here and below the notation $\phi_i$ means the map $\phi$ applied starting from the $i$th position of some $X^{m}$. A map $k \colon X \to X$ will be called a \emph{reflection} for $(X,r)$ if it satisfies the \emph{reflection equation (RE)} on $X^2$:
\begin{align}
rk_2rk_2 &= k_2rk_2r. \label{E:RE}
\end{align}

The linear versions of these equations (that is, when $X$ is a vector space, the direct products $\times$ are replaced with the tensor products $\otimes$, and all maps are required to be linear) appeared in physics. In particular, in the study of $n$ particle scattering on a half-line, a solution $r$ represents a collision between two particles, and a reflection $k$ represents a collision between a particle and the wall delimiting the half-line. See Figs. \ref{P:Scattering}--\ref{P:Scattering2} for an illustration.

\begin{figure}[h]
\centering
\begin{tikzpicture}[scale=0.7]
\draw (0,0)--(1,2);
\draw (.7,0)--(0,2);
\draw (2.5,0)--(-1,2);
\draw [|->, red, thick]  (-1.8,0) -- (-1.8,2);
\node [red] at (-2.5,1){time};
\draw [myviolet, line width=.5mm]  (3,0) -- (3,2);
\node [myviolet] at (3,-.5){wall};
\draw [myblue, line width=.4mm]  (3,0) -- (-1.2,0);
\node [myblue] at (.8,-.5){half-line};
\node  at (4.5,1){$=$};
\node  at (5.5,1){ };
\end{tikzpicture}
\begin{tikzpicture}[scale=0.7]
\draw (0.2,0)--(1,2);
\draw (1,0)--(0.3,2);
\draw (1.8,0)--(-1.5,2);
\draw [myviolet, line width=.5mm]  (3,0) -- (3,2);
\draw [myblue, line width=.4mm]  (3,0) -- (-1.8,0);
\node [myviolet] at (3,-.7){ };
\end{tikzpicture}
\caption{The Yang--Baxter equation governing collisions between particles moving on a half-line}\label{P:Scattering}
\end{figure}
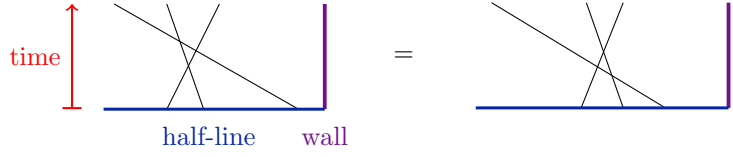

\begin{figure}[h]
\centering
\begin{tikzpicture}[scale=0.8]
\draw (0,0)--(2,1)--(0,2);
\draw (1,0)--(2,0.8)--(.5,2);
\draw [myviolet, line width=.5mm]  (2,0) -- (2,2);
\node  at (4,1){$=$};
\node  at (5,1){ };
\end{tikzpicture}
\begin{tikzpicture}[scale=0.8]
\draw (0,0)--(2,1)--(0,2);
\draw (.5,0)--(2,1.2)--(1,2);
\draw [myviolet, line width=.5mm]  (2,0) -- (2,2);
\end{tikzpicture}
\caption{The reflection equation governing collisions between particles and the wall}\label{P:Scattering2}
\end{figure}
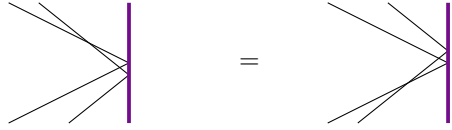

Since then the YBE has made its way into many areas of mathematics. The RE is far less familiar to mathematicians. An exception is the study of braids with a frozen strand, or, equivalently, the study of the motion of $n$ particles in an annulus (whereas usual braids describe the motion of $n$ particles in a disk). Figs. \ref{P:YBE}--\ref{P:RE} should make this connection clear. See \cite{Chow,Sossinsky,Schwiebert} for more detail.

The idea of studying set-theoretic rather than linear YBE solutions goes back to Drinfel$'$d. His goal was to restrict attention to this more tractable, but still extremely rich class of solutions. This class is currently an object of active mathematical research; see \cite{SS18,GI18,JKVA19,MBBER19,Rump20,CCS20,CJO20} and references therein for some recent advances. The interest in set-theoretic solutions to the RE is on the contrary rather new. It was started in \cite{MR3030177,MR3207925}, where some explicit examples were given. The only articles going urther in this direction we are aware of are  \cite{DC18,KuOka,K19,SVW18,DoiSmo}. The objective of all those contributions is to systematically construct and study reflections for different types of solutions. 

In the present paper, we are looking at reflections from a completely different angle. For us they are tools for studying the corresponding solution. Concretely, with any solution $r$ on $X$, written explicitly as
\begin{align}
r(a,b) = (\lambda_a(b),\rho_b(a)),\label{E:r_explicit}
\end{align}
 come the following invariants:
\begin{itemize}
\item the \emph{structure monoid} of $r$, given by the presentation
\begin{align}
M(X,r) &= \langle a\in X\mid ab=\lambda_a(b) \rho_b(a), \text{ for all } a,b\in X \rangle;\label{E:StrMonoid}
\end{align}
\item the \emph{structure shelf} of $r$:
\[a \op b = \rho_b \lambda_{\rho_a^{-1}(b)}(a);\]
\item the \emph{derived monoid} $A(X,r)$ of $r$, which is the structure monoid of the following \emph{derived solution} on $X$:
\begin{align}
r_{\op} \colon (a,b) \mapsto (b \op a,a);\label{E:DerSol}
\end{align}
\item the \emph{structure group} $G(X,r)$ and the \emph{derived group} of $r$, given by the same presentation as the monoids $M(X,r)$ and $A(X,r)$ respectively. 
\end{itemize}
The constructions of structure shelf and derived monoid/group are valid only for \emph{right-non-degenerate (RND)}\footnote{In \cite{LebVen} we used the term \emph{LND} instead; here we stick to the more popular convention.} solutions $r$, that is, their right components $\rho_b$ are bijective on $X$ for all $b \in X$. Similarly, a solution is called \emph{left-non-degenerate (LND)} if all the $\lambda_a$'s are bijective, and \emph{non-degenerate} if it is both RND and LND.

Recall that a \emph{shelf} is a set $X$ with a binary self-distributive operation $\op$, in the sense of 
\[(a \op b) \op c = (a \op c) \op (b \op c)\]
for all $a,b,c \in X$. For any shelf, the map \eqref{E:DerSol} is a solution.

Note that all these constructions are not only useful invariants of solutions, but also a rich source of monoids and groups with remarkable properties.

The structure and the derived monoids are related by a bijective monoid $1$-cocycle
\[\overline{J} \colon M(X,r) \to A(X,r).\]
Similarly, the structure and the derived groups are related by a bijective group $1$-cocycle. As a result, one can deduce various properties of the structure monoid and group of a solution from those of its structure shelf. This was used to understand the possible values of the size of the degree $2$ component of $M(X,r)$ in \cite{CJO_Minimality}; to construct a remarkable finite quotient of $G(X,r)$ in \cite{LV_StrGroups}, generalising the Coxeter-like quotients from \cite{DehCycleSet}; and to understand when $G(X,r)$ has torsion in \cite{JKVAV20}. Also, $\overline{J}$ can be used to transport the product of $A(X,r)$ to $M(X,r)$. Combining it with the original product of $M(X,r)$, one gets a ring-like structure called \emph{skew-brace} (see \cite{SkewBraces} and numerous recent references thereto). This allows for fruitful transport of techniques and intuitions from group and ring theories to the study of YBE solutions. 

In this paper, we show that $r_{\op}$ is actually a part of a whole family of solutions $r^{(k)}$ indexed by the reflections $k$ of $r$. We recover $r_{\op}$ as $r^{(\Id)}$, and we get $r$ itself by adding an element $*$ to $X$ and taking as $k$ the projection onto $*$ (Example \ref{EX:original}). In general, there are many more solutions (of variable complexity) in this family, which we illustrate with examples. All these solutions induce isomorphic actions of braid monoids/groups, and share many properties (invertibility, involutiveness, idempotence etc.). They have the same structure shelves. The structure monoids of these solutions are related by bijective maps
\[\overline{J^{(k)}} \colon M(X,r) \to M(X,r^{(k)})\]
generalising $\overline{J}$. These maps are not monoid $1$-cocycles in general, but satisfy a more involved property \eqref{E:JproductShort}. We expect that, for a wide class of solutions, similar bijections relate the structure groups of solutions from this family. We have a candidate for such bijections, but cannot prove that it does the job. 
 Finally, every monoid $M(X,r^{(k)})$ left-acts on $X$; this generalises the left action of the derived monoid, given on the generators by $a \cdot b = b \op a$.

In the second part of the paper, we take a closer look at the reflections for an involutive solution. We show that to check the reflection equation for an LND/RND solution, it is enough to compare the first/second coordinates of~\eqref{E:RE} only. As a consequence, a map commuting with all the $\lambda_a$'s is a reflection for an LND solution, and a map $k$ satisfying $\rho_{k(a)}=\rho_a$ for all $a \in X$ is a reflection for an RND solution. The LND statements appeared in \cite{SVW18}, while the RND ones are new. We also suggest two ways (a stronger and a weaker) of dividing reflections for an involutive solution into equivalence classes, making the study of the (generally huge) set of reflections more organised. We show that two reflections equivalent in the stronger sense yield isomorphic solutions $r^{(k)}$. On the other hand, non-equivalent reflections might give the same solutions $r^{(k)}$. 

Finally, given an LND involutive solution and a map $k \colon X \to X$, we construct a binary operation $b \ast a= \lambda_b k \lambda_b^{-1} (a)$. We show that $k$ is a reflection if and only if $\ast$ satisfies the \emph{strange} property
\[(a \ast b) \ast a = b \ast a\]
for all $a,b \in X$. Moreover, $k$ can be uniquely reconstructed from $\ast$.

The constructions, results and proofs in this paper are given using a mixture of two languages: algebraic formulas, and the graphical calculus developed in \cite{LebVen} (which we use freely with a minimum of explanations; the reader is referred to \cite{LebVen} for more detail). The formulas we get are often difficult to digest; their pictorial analogues are on the contrary intuitive, and explain their origin.

\section{Reflections as parameters for generalised derived solutions}

Let $(X,r)$ be a right-non-degenerate (RND) set-theoretic solution to the Yang--Baxter equation (further simply called \emph{solution}). Let $k$ be a reflection for $(X,r)$. We will use $k$ to construct another solution $r^{(k)}$ on $X$. For $k=\Id_X$ we recover the derived solution. We then describe bijective maps $J^{n;k} \colon X^{ n} \to X^{ n}$ intertwining the braid monoid/group actions on $X^{ n}$ coming from $r$ and $r^{(k)}$ respectively; the case $k=\Id_X$ yields the guitar map from \cite{Sol,LYZ,LebVen}. Thus with any RND solution comes a whole family of solutions parametrised by its reflections. These solutions vary in complexity, but all yield isomorphic braid monoid/group actions, the same structure shelf, and, as we shall see in the next section, intimately related structure monoids. 

Now let us turn to concrete statements.

\begin{defn}\label{D:k-sol}
The \emph{generalised derived solution} associated to a reflection $k$ for an RND solution $(X,r)$, or simply a $k$-\emph{derived solution}, is the map
\begin{align*}
r^{(k)} \colon X \times X &\longrightarrow X \times X,\\
(a,b) &\longmapsto (b'= \rho_{k(a')}\lambda_{\rho_{k(b)}^{-1}(a)}(b)\ ,\ a'=  \rho_b\rho_{k(b)}^{-1}(a)).
\end{align*}
\end{defn}

In Corollary~\ref{C:GenDerived} we will show that $r^{(k)}$ is indeed a solution.

\begin{exa}\label{EX:derived}
The trivial reflection $k=\Id_X$ yields the classical derived solution $(a,b) \longmapsto (\rho_a \lambda_{\rho_{b}^{-1}(a)}(b),a)$.
\end{exa}

\begin{exa}\label{EX:original}
Extend the solution $(X,r)$ to $X^*=X \sqcup \{*\}$ by imposing $\lambda_*=\rho_*=\Id_{X^*}$, and $\lambda_a(*)=\rho_a(*)=*$ for all $a \in X$. Consider the projection onto $\{*\}$: $k(a)=*$ for all $a \in X^*$. It is a reflection, since both sides of \eqref{E:RE} yield $(*,*)$ for all $a,b \in X^*$. This reflection yields $r^{(k)}(a,b)=(\lambda_a(b),\rho_b(a))$, which is simply the original solution extended to $X^*$.
\end{exa}

In the next example, on the contrary, the generalised derived solution is very different from the original one:
\begin{exa}\label{EX:3}
Consider the solution $X=\{1,2,3\}$, $\lambda_3=\rho_3=(12)$, and its reflection $k(a)=3$ for all $a$. Then for $r^{(k)}$ we have $\lambda_1=\rho_1=\lambda_2=\rho_2=(12)$. Here and below the omitted $\lambda$'s and $\rho$'s are all identities.
\end{exa}

In the last example, we see that even small solutions may have many non-isomorphic generalised derived solutions of very different nature.
\begin{exa}\label{EX:long}
Consider the solution $X=\{1,2,3,4\}$, $\lambda_a=(132)$ for all $a$, $\rho_1=(13), \rho_2=(12), \rho_3=(23), \rho_4=(123)$. To see that it is a solution, consider the map 
\[r(a,b)=(b-1,-a-b-1) \text{ on } \{1,2,3\},\]
where the set $\{1,2,3\}$ is identified with $\Z/3$. One checks that 
\[r_1r_2r_1(a,b,c)=(c+1,-b-c+1,a+b-c)=r_2r_1r_2(a,b,c).\]
In fact it is sufficient to compute the first two coordinates only (which are the simplest ones), since $r$ preserves the signed sum of its entries:
\[(b-1)-(-a-b-1)=a-b.\]
Further, the map $\sigma=(132)$ is compatible with this map $r$, in the sense of $r(\sigma \times \sigma)=(\sigma \times \sigma)r$. Thus one can extend $r$ to $X=\{1,2,3,4\}$ by putting $\lambda_4=\sigma$ and $\rho_4=\sigma^{-1}$. This is precisely the solution described above. 

\noindent One checks that the maps of the following type are reflections for our solution:
\begin{itemize}
\item projections $p=aaaa$ for all $a$;
\item maps $p=aaa4$ for all $a$;
\item maps $p=444a$ for all $a$.
\end{itemize} 
Here and below when writing $k=abcd$ we mean $k(1)=a, k(2)=b$, $k(3)=c$, $k(4)=d$. Together with $k=\Id$, this sums up to eleven reflections. A computer-aided verification shows that this is the complete list.

\noindent The reflection $k=\Id$ produces the derived solution given by  
	\begin{align*}
	\rho_a=\Id \text{ for all } a, && \lambda_a=(a-1,a+1) \text{ for all } a \neq 4, \ \lambda_4=\Id.
	\end{align*}
Here and below $a \pm 1$ is computed modulo $3$.	

\noindent The projection $k=4444$ produces the solution given by
\begin{align*}
	\rho_a=(a-1,a+1) \text{ for all } a \neq 4, \ \rho_4=\Id,&&\lambda_a=\Id \text{ for all } a.
\end{align*}
This is the mirror variant of the preceding solution.

\noindent The projection $k=1111$ produces the solution given by
\begin{align*}
	\rho_1=\id,\ \rho_2=(132),\ \rho_3=(123),\ \rho_4=(23),&&\lambda_a=(23) \text{ for all } a.
\end{align*}

\noindent The reflection $k=1114$ produces the solution given by 
\begin{align*}
	\rho_1=\rho_4=\Id, && \rho_2=(132), && \rho_3=(123), && \lambda_a=(23) \text{ for all } a \neq 4, \lambda_4=\Id.
\end{align*}
It differs from the preceding solution by the values of $\rho_4$ and $\lambda_4$.

\noindent These four solutions are clearly non-isomorphic. In fact, according to our computer, this is a complete list of generalised derived solutions in this case, up to isomorphism.			
\end{exa}

Already our central Definition~\ref{D:k-sol} is difficult to grasp in the algebraic form. Let us give its pictorial interpretation.

Graphically, the solution $r$ and the reflection $k$ are represented by a crossing and a bead respectively. The YBE then becomes the Reidemeister $\mathrm{III}$ move (Fig.~\ref{P:YBE}), in the sense that, putting the same colors at the bottom of the two diagrams, one propagates them upwards to get the same colors at the top. In other words, the Reidemeister $\mathrm{III}$ move has only a local effect on colorings. Here and below we mark in green the ``starting'' colors of a diagram, which uniquely determine the colors of all the remaining diagram parts. The reflection equation is depicted (in the same sense as the YBE) on Fig.~\ref{P:RE}. It does not seem natural drawn this way, but it becomes so if one thinks of the beads as the twists around a straight pole, as shown in the right part of Fig.~\ref{P:RE}. In what follows we will use the beads rather than the pole in order to make our diagrams lighter.

\begin{figure}[h]
\centering
\begin{tikzpicture}[scale=0.7]
\draw [rounded corners](0,0)--(0,0.25)--(0.4,0.4);
\draw [rounded corners](0.6,0.6)--(1,0.75)--(1,1);
\draw [rounded corners](1,0)--(1,0.25)--(0,0.75)--(0,1);
\node  at (0,-0.4) [mygreen]  {$a$};
\node  at (1,-0.4) [mygreen] {$b$};
\node  at (-.2,1) [above] {$\lambda_a(b)$};
\node  at (1.2,1) [above] {$\rho_b(a)$};
\draw [|->, red, thick]  (-1.5,0) -- (-1.5,1);
\node  at (2,.5){ };
\end{tikzpicture}
\begin{tikzpicture}[scale=0.7]
\draw (0,0)--(0,1);
\node  at (0,-0.4) [mygreen] {$a$};
\node  at (0,1) [above] {$k(a)$};
\fill (0,.5) circle (.1);
\node  at (3,.5){ };
\end{tikzpicture}
\begin{tikzpicture}[xscale=0.45,yscale=0.45]
\draw [rounded corners](0,0)--(0,0.25)--(0.4,0.4);
\draw [rounded corners](0.6,0.6)--(1,0.75)--(1,1.25)--(1.4,1.4);
\draw [rounded corners](1.6,1.6)--(2,1.75)--(2,3);
\draw [rounded corners](1,0)--(1,0.25)--(0,0.75)--(0,2.25)--(0.4,2.4);
\draw [rounded corners](0.6,2.6)--(1,2.75)--(1,3);
\draw [rounded corners](2,0)--(2,1.25)--(1,1.75)--(1,2.25)--(0,2.75)--(0,3);
\node  at (4,1.5){\Large $\overset{YBE}{=}$};
\node  at (4,-.1){ };
\end{tikzpicture}
\begin{tikzpicture}[xscale=0.45,yscale=0.45]
\draw [rounded corners](1,1)--(1,1.25)--(1.4,1.4);
\draw [rounded corners](1.6,1.6)--(2,1.75)--(2,3.25)--(1,3.75)--(1,4);
\draw [rounded corners](0,1)--(0,2.25)--(0.4,2.4);
\draw [rounded corners](0.6,2.6)--(1,2.75)--(1,3.25)--(1.4,3.4);
\draw [rounded corners](1.6,3.6)--(2,3.75)--(2,4);
\draw [rounded corners](2,1)--(2,1.25)--(1,1.75)--(1,2.25)--(0,2.75)--(0,4);
\node  at (2.3,.9){ };
\end{tikzpicture}
\caption{Graphical depiction of a solution, its reflexion, and the Yang--Baxter equation}\label{P:YBE}
\end{figure}
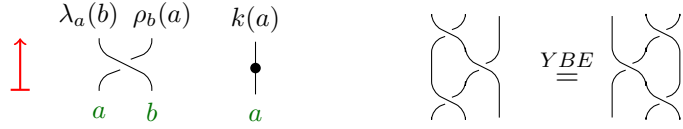

\begin{figure}[h]
\centering
\begin{tikzpicture}[scale=0.6]
\draw [rounded corners](0,-.4)--(0,0.25)--(0.4,0.4);
\draw [rounded corners](0.6,0.6)--(1,0.75)--(1,1.25)--(0,1.75)--(0,2.4);
\draw [rounded corners](1,-.4)--(1,0.25)--(0,0.75)--(0,1.25)--(0.4,1.4);
\draw [rounded corners] (0.6,1.6)--(1,1.75)--(1,2.4);
\fill (1,1) circle (.15);
\fill (1,0) circle (.15);
\node  at (2.3,1){\Large $\overset{RE}{=}$};
\node  at (2.5,-1.5){};
\end{tikzpicture}
\begin{tikzpicture}[scale=0.6]
\draw [rounded corners](0,-.4)--(0,0.25)--(0.4,0.4);
\draw [rounded corners](0.6,0.6)--(1,0.75)--(1,1.25)--(0,1.75)--(0,2.4);
\draw [rounded corners](1,-.4)--(1,0.25)--(0,0.75)--(0,1.25)--(0.4,1.4);
\draw [rounded corners] (0.6,1.6)--(1,1.75)--(1,2.4);
\fill (1,1) circle (.15);
\fill (1,2) circle (.15);
\node  at (5,-1.5){};
\end{tikzpicture}
\begin{tikzpicture}[xscale=0.55,yscale=0.5]
\draw [rounded corners](0,-2)--(0,0.25)--(0.4,0.4);
\draw [rounded corners](0.6,0.6)--(1,0.75)--(1,1.25)--(1.4,1.4);
\draw [rounded corners](1.6,1.6)--(2,1.75)--(2,2.25)--(1,2.75)--(1,3.25)--(0,3.75)--(0,4);
\draw [rounded corners](1,-2)--(1,-1.75)--(1.4,-1.6);
\draw [rounded corners](1.6,-1.4)--(2,-1.25)--(2,-.75)--(1,-.25)--(1,0.25)--(0,0.75)--(0,3.25)--(0.4,3.4);
\draw [rounded corners] (0.6,3.6)--(1,3.75)--(1,4);
\draw [color=myviolet,line width=.5mm] (1.5,-2)--(1.5,-.6);
\draw [color=myviolet,line width=.5mm] (1.5,-.4)--(1.5,2.4);
\draw [color=myviolet,line width=.5mm] (1.5,2.6)--(1.5,4);
\node  at (3.5,1){\Large $\overset{RE}{=}$};
\node  at (5,0){};
\end{tikzpicture}
\begin{tikzpicture}[xscale=0.55,yscale=0.5]
\draw [rounded corners](0,-2)--(0,-1.75)--(0.4,-1.6);
\draw [rounded corners](0.6,-1.4)--(1,-1.25)--(1,-.75)--(1.4,-.6);
\draw [rounded corners](1.6,-.4)--(2,-.25)--(2,.25)--(1,0.75)--(1,1.25)--(0,1.75)--(0,4);
\draw [rounded corners](1,-2)--(1,-1.75)--(0,-1.25)--(0,1.25)--(0.4,1.4);
\draw [rounded corners] (0.6,1.6)--(1,1.75)--(1,2.25)--(1.4,2.4);
\draw [rounded corners] (1.6,2.6)--(2,2.75)--(2,3.25)--(1,3.75)--(1,4);
\draw [color=myviolet,line width=.5mm] (1.5,-2)--(1.5,.4);
\draw [color=myviolet,line width=.5mm] (1.5,.6)--(1.5,3.4);
\draw [color=myviolet,line width=.5mm] (1.5,3.6)--(1.5,4);
\end{tikzpicture}
\caption{A short-hand and a detailed graphical depictions of the reflection equation}\label{P:RE}
\end{figure}
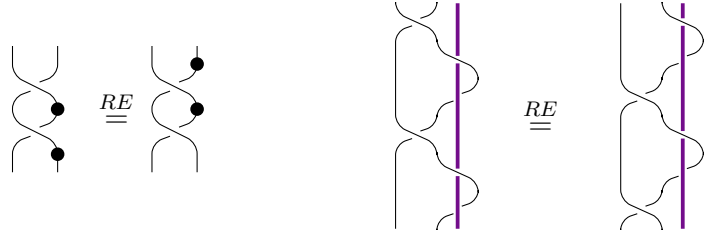

Now, in the left diagram of Fig.~\ref{P:GDsolution}, the (green) colors $a$ and $b$ put just below the two beads uniquely determine all the remaining colors, in particular the bottom left color $c=\rho_{k(b)}^{-1}(a)$. Then in the right diagram, the colors $c$ and $b$ put at the bottom uniquely determine all the remaining colors, in particular the colors $a'=\rho_b(c)$ and $b'=\rho_{k(a')}\lambda_{c}(b)$ just below the beads. The map $r^{(k)}$ from Definition~\ref{D:k-sol} works with the colors just below the beads: it sends $(a,b)$ to $(b',a')$. For future reference, note that the top colors of the two diagrams of Fig.~\ref{P:GDsolution} coincide due to the reflection equation.

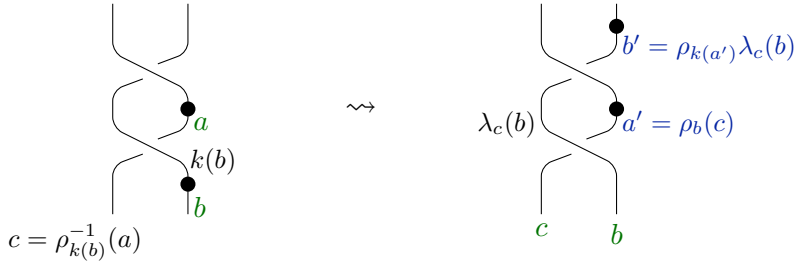
\begin{figure}[h]
\centering
\begin{tikzpicture}[xscale=1,yscale=1]
\draw [rounded corners](0,-.4)--(0,0.25)--(0.4,0.4);
\draw [rounded corners](0.6,0.6)--(1,0.75)--(1,1.25)--(0,1.75)--(0,2.4);
\draw [rounded corners](1,-.4)--(1,0.25)--(0,0.75)--(0,1.25)--(0.4,1.4);
\draw [rounded corners] (0.6,1.6)--(1,1.75)--(1,2.4);
\fill (1,1) circle (.1);
\fill (1,0) circle (.1);
\node  [mygreen,right] at (.95,-0.3){\large $b$};
\node  [mygreen,right] at (.95,.8){\large $a$};
\node  [right] at (.9,.3){$k(b)$};
\node  [below] at (-.5,-.4){$c=\rho_{k(b)}^{-1}(a)$};
\node  at (3.3,1){\Large $\leadsto$};
\node  at (4.5,-1.5){};
\end{tikzpicture}
\begin{tikzpicture}[xscale=1,yscale=1]
\draw [rounded corners](0,-.4)--(0,0.25)--(0.4,0.4);
\draw [rounded corners](0.6,0.6)--(1,0.75)--(1,1.25)--(0,1.75)--(0,2.4);
\draw [rounded corners](1,-.4)--(1,0.25)--(0,0.75)--(0,1.25)--(0.4,1.4);
\draw [rounded corners] (0.6,1.6)--(1,1.75)--(1,2.4);
\fill (1,1) circle (.1);
\fill (1,2.1) circle (.1);
\node  [mygreen,below] at (0,-.4){\large $c$};
\node  [mygreen,below] at (1,-.4){\large $b$};
\node  [left] at (.05,.8){$\lambda_{c}(b)$};
\node  [myblue,right] at (.95,.8){$a'=\rho_b(c)$};
\node  [myblue,right] at (.95,1.8){$b'=\rho_{k(a')}\lambda_{c}(b)$};
\node  at (5,-1.5){};
\end{tikzpicture}
\caption{The generalised derived solution $(a,b) \mapsto (b',a')$.}\label{P:GDsolution}
\end{figure}

Now, we have to prove that our maps $r^{(k)}$ are indeed YBE solutions. This fact, as well as the fundamental properties of these solutions, are based on the following construction.

\begin{defn}
Given a solution $(X,r)$ and its reflection $k$, the \emph{$k$-Garside maps} are the following operators on $X^{n}$:
\begin{align*}
\Delta^{n;k} &= k_n\ (r_{n-1}\cdots r_2r_1)\ k_n \cdots k_n \ (r_{n-1}r_{n-2})\ k_n \ r_{n-1}\ k_n.
\end{align*}
 The \emph{$k$-guitar maps} are then defined by
\begin{align*}
J^{n;k} \colon X^{n} & \longrightarrow X^{n},\\
(a_1,a_2,\ldots,a_n) &\longmapsto (\rho_{\Delta^{n-1;k}(a_2,\ldots,a_n)}(a_1), \ \rho_{\Delta^{n-2;k}(a_3,\ldots,a_n)}(a_2),\ldots,\\ 
&\hspace*{2.5cm}\rho_{\Delta^{2;k}(a_{n-1},a_n)}(a_{n-2}),\ \rho_{k(a_n)}(a_{n-1}),\ a_n).
\end{align*}
Here we extended the maps $\rho$ to 
\begin{equation}\label{E:rho_ov_b}
\rho_{(b_1,b_2,\ldots,b_m)} =\rho_{b_m} \cdots \rho_{b_2}\rho_{b_1}.
\end{equation}
\end{defn}

The maps $\Delta^{n;k}$ resemble the action of the classical Garside elements of the positive braid monoids, except for the reflection $k$ repeatedly applied to rightmost elements, hence the name.

The maps $\Delta^{n;k}$ and $J^{n;k}$ are graphically interpreted in Fig.~\ref{P:Guitar}. As with the generalised derived solution $r^{(k)}$, the important colors to look at are the ones just below the beads.

\begin{figure}[h]
\centering
\begin{tikzpicture}[xscale=1.4,yscale=1.4]
 \draw [rounded corners=10] (0,0) -- (5,2.5) -- (4,3);
 \draw [line width=4pt,white, rounded corners=10] (1,0) -- (5,2) -- (3,3); 
 \draw [rounded corners=10] (1,0) -- (5,2) -- (3,3); 
 \draw [line width=4pt,white, rounded corners=10] (2,0) -- (5,1.5) -- (2,3); 
 \draw [rounded corners=10] (2,0) -- (5,1.5) -- (2,3);
 \draw [line width=4pt,white, rounded corners=10] (3,0) -- (5,1) -- (1,3); 
 \draw [rounded corners=10] (3,0) -- (5,1) -- (1,3); 
 \draw [line width=4pt,white, rounded corners=10] (4,0) -- (5,0.5) -- (0,3);   
 \draw [rounded corners=10] (4,0) -- (5,0.5) -- (0,3);  
 \node at (0,-0.4) [mygreen,above] {${a_1}$};  
 \node at (1,-0.4) [mygreen,above] {${a_2}$}; 
 \node at (2,-0.4) [mygreen,above] {${\cdots}$}; 
  \node at (3,-0.4) [mygreen,above] {${a_{n-1}}$}; 
 \node at (4,-0.4) [mygreen,above] {${a_n}$}; 
 \node at (4.7,2.3) [myblue,right] {$a'_1$};   
 \node at (4.7,1.8) [myblue,right] {$a'_2$}; 
 \node at (4.8,1.5) [myblue,right] {$\;\scriptstyle{\vdots}$};  
 \node at (4.7,.8) [myblue,right] {$a'_{n-1}$};   
 \node at (4.7,.3) [myblue,right] {$a'_n$};  
 \fill (4.85,2.5) circle (.08); 
 \fill (4.85,2) circle (.08);  
 \fill (4.85,1.5) circle (.08);  
 \fill (4.85,1) circle (.08);  
 \fill (4.85,.5) circle (.08);  
 \node at (7,1.5) [myblue] {$J^{n;k}(a_1,a_2,\ldots,a_{n-1},a_n)$}; 
 \node at (7,1) [myblue] {$=(a'_1,a'_2,\ldots,a'_{n-1},a'_n)$}; 
 \node at (2,3.5) [myred] {$\Delta^{n;k}(a_1,a_2,\ldots,a_{n-1},a_n)$};  
 \draw [line width=1.5pt,myred] (-.2,3) -- (-.2,3.2) -- (4.2,3.2) -- (4.2,3); 
 \draw [line width=1.5pt,myblue] (5.2,2.7) -- (5.4,2.7) -- (5.4,0) -- (5.2,0);  
\end{tikzpicture} 
   \caption{The $k$-Garside map $\Delta^{n;k}$ and the $k$-guitar map $J^{n;k}$.}\label{P:Guitar}
\end{figure}
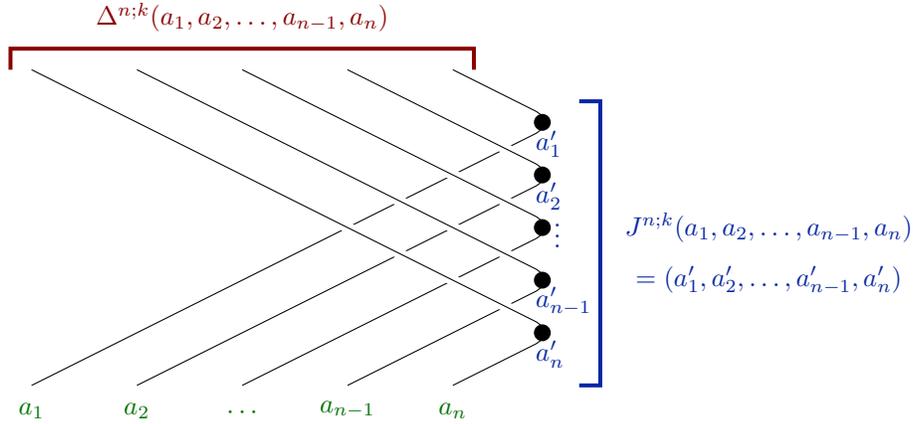

\begin{lem}
The maps $J^{n;k}$ are bijective if $r$ is RND.
\end{lem}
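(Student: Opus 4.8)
The plan is to exploit the lower-triangular shape of $J^{n;k}$ with respect to the coordinate order $n > n-1 > \cdots > 1$, and to invert it one coordinate at a time from right to left using the RND hypothesis. First I would record the crucial structural feature of the definition: the last output coordinate is literally $a_n$, and for $i<n$ the $i$th output coordinate can be written uniformly as
\[
a'_i = \rho_{\Delta^{n-i;k}(a_{i+1},\ldots,a_n)}(a_i),
\]
with the conventions $\Delta^{1;k}=k$ (so the penultimate coordinate is $\rho_{k(a_n)}(a_{n-1})$) and $\Delta^{0;k}$ empty (so the last is $a_n$). The point to verify carefully is that $a'_i$ depends on $a_i$ only through a single application of the map $\rho_w$, where the index $w = \Delta^{n-i;k}(a_{i+1},\ldots,a_n) \in X^{n-i}$ is a tuple determined \emph{entirely} by the later entries $a_{i+1},\ldots,a_n$ and not by $a_i$ itself. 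This is what makes the triangular structure genuine rather than merely suggestive.

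Next I would check that each ``diagonal'' map $\rho_w$ is a bijection of $X$. By the extension convention $\rho_{(b_1,\ldots,b_m)} = \rho_{b_m}\cdots\rho_{b_1}$, the map $\rho_w$ is a finite composite of maps of the form $\rho_{b_j}$, each of which is a bijection of $X$ precisely because $r$ is RND. Hence $\rho_w$ is bijective for every tuple $w$; in particular $\rho_{\Delta^{n-i;k}(a_{i+1},\ldots,a_n)}$ and $\rho_{k(a_n)}$ are invertible.

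With these two facts in hand I would build the inverse by downward induction on the coordinate index. Given an output $(a'_1,\ldots,a'_n)$, set $a_n := a'_n$. Assuming $a_{i+1},\ldots,a_n$ have already been reconstructed, form the tuple $w := \Delta^{n-i;k}(a_{i+1},\ldots,a_n)$, which is computable from data already known, and set $a_i := \rho_w^{-1}(a'_i)$. This produces the unique candidate preimage: running $J^{n;k}$ on it recovers $\rho_w(a_i)=a'_i$ in each slot, and running the reconstruction on $J^{n;k}(a_1,\ldots,a_n)$ recovers the original $a_i$, so the two compositions are both the identity. Thus $J^{n;k}$ is a bijection with an explicit two-sided inverse. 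I would phrase the conclusion through this explicit inverse rather than through injectivity plus a counting argument, since $X$ may be infinite.

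There is no serious obstacle here; the content is almost entirely bookkeeping, and the RND hypothesis does all the real work once the triangular shape is exposed. The one step deserving genuine care is the triangularity claim of the first paragraph — that the subscript $\Delta^{n-i;k}(a_{i+1},\ldots,a_n)$ involves none of $a_1,\ldots,a_i$. Graphically this matches reading the guitar diagram of Fig.~\ref{P:Guitar} from the beads back down to the bottom: the output just below each bead is obtained from the input strand by the invertible right-action $\rho_w$, so the colors propagate backwards uniquely, which is exactly the inversion described above.
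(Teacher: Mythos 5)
Your argument is correct and is essentially the paper's own proof: the paper likewise reconstructs $a_n=a'_n$, then $a_{n-1}=\rho_{k(a_n)}^{-1}(a'_{n-1})$, and so on down to $a_1=\rho_{\Delta^{n-1;k}(a_2,\ldots,a_n)}^{-1}(a'_1)$, relying on the triangular structure and on each $\rho_w$ being a composite of the bijections $\rho_{b_j}$. You simply make explicit the bookkeeping (that the subscript tuple depends only on later coordinates, and that the reconstruction is a two-sided inverse) that the paper leaves implicit.
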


\begin{proof}
If all the $\rho_a$'s are bijective, then from 
\[J^{n;k}(a_1,a_2,\ldots,a_n)=(a'_1,a'_2,\ldots,a'_n)\] 
one can reconstruct $a_n=a'_n$, then $a_{n-1}=\rho_{k(a_n)}^{-1}(a'_{n-1})$, and so on until 
\[a_1=\rho_{\Delta^{n-1;k}(a_2,\ldots,a_n)}^{-1}(a'_1).\qedhere\]
\end{proof}

\begin{thm}\label{T:GenDerived}
Let $(X,r)$ be an RND solution, and $k$ its reflection. Then the $k$-guitar maps intertwine the solution~$r$ and the $k$-derived solution $r^{(k)}$. That is, for all $n \in \N$ and all $1 \leq i <n$, one has
\begin{align}
J^{n;k}r_i &= r^{(k)}_iJ^{n;k}.\label{E:JandR}
\end{align}
As for the $k$-Garside map, it is compatible with $r$ in the following sense:
\begin{align}
\Delta^{n;k}r_i &= r_{n-i}\Delta^{n;k}.\label{E:DeltaandR}
\end{align}
\end{thm}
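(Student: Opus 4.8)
The plan is to establish the structural identity \eqref{E:DeltaandR} first and then deduce the intertwining \eqref{E:JandR} from it; both arguments run most transparently in the graphical calculus, where the YBE \eqref{E:YBE} is the Reidemeister~III move and the RE \eqref{E:RE} lets a crossing pass a bead. Conceptually, $\Delta^{n;k}$ is a beaded half-twist: it reverses the order of the $n$ strands, so it ought to conjugate a crossing placed at the bottom in position $i$ into a crossing at the top in position $n-i$, exactly as the classical Garside element does for $\sigma_i \mapsto \sigma_{n-i}$.

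For \eqref{E:DeltaandR} I would argue by induction on $n$. The base case $n=2$ reads $k_2 r k_2 r = r k_2 r k_2$, which is precisely the reflection equation \eqref{E:RE}; thus \eqref{E:DeltaandR} is a higher analogue of the RE. For the inductive step I would slide the bottom crossing $r_i$ upward through the sweeping diagram of $\Delta^{n;k}$: it passes the crossings by repeated use of \eqref{E:YBE}, commutes freely with a bead $k_n$ whenever it is far from position $n$, and passes the bead via \eqref{E:RE} exactly when it reaches the wall strand (position $n-1$, adjacent to $k_n$). Since $\Delta^{n;k}$ sends the bottom positions $\{i,i+1\}$ to the top positions $\{n-i,\,n+1-i\}$, the crossing emerges at the top as $r_{n-i}$. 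The main obstacle here is the bookkeeping: one should fix a clean recursion expressing $\Delta^{n;k}$ through $\Delta^{n-1;k}$, a single block $r_{n-1}\cdots r_1$, and the beads, so that it becomes unambiguous where each YBE and each RE is applied and the induction closes.

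Granting \eqref{E:DeltaandR}, I would prove \eqref{E:JandR} coordinate by coordinate. Write $J^{n;k}(a_1,\dots,a_n)=(a'_1,\dots,a'_n)$ with $a'_j=\rho_{\Delta^{n-j;k}(a_{j+1},\dots,a_n)}(a_j)$, put $c=r_i(a_1,\dots,a_n)$ and $c'=J^{n;k}(c)$, and compare $c'$ with $r^{(k)}_i(a'_1,\dots,a'_n)$. The coordinates $j>i+1$ are untouched on both sides. For $j<i$, pre-applying $r_i$ amounts to applying $r_{i-j}$ inside the $(n-j)$-tuple argument of $\Delta^{n-j;k}$; by \eqref{E:DeltaandR} this equals $r_{(n-j)-(i-j)}=r_{n-i}$ applied to $\Delta^{n-j;k}(a_{j+1},\dots,a_n)$, and then $\rho$ of that tuple is unchanged because applying $r$ to two adjacent entries preserves the composite \eqref{E:rho_ov_b}, via the standard relation $\rho_y\rho_x=\rho_{\rho_y(x)}\rho_{\lambda_x(y)}$ (a consequence of the YBE) that makes $\rho$ factor through the structure monoid \eqref{E:StrMonoid}. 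Hence $c'_j=a'_j$ for all $j\neq i,i+1$.

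The remaining two coordinates $j\in\{i,i+1\}$ carry all the content: one must check that $(c'_i,c'_{i+1})=r^{(k)}(a'_i,a'_{i+1})$. This is exactly the local situation of Fig.~\ref{P:GDsolution}, which Definition~\ref{D:k-sol} was designed to encode, provided one first simplifies $\Delta^{n-i;k}(\rho_{a_{i+1}}(a_i),a_{i+2},\dots,a_n)$ by stripping its leading entry through the bead $k$ and the reflection equation. I expect this identification to be the principal difficulty: it requires the recursion for $\Delta^{m;k}$ on an $m$-tuple in terms of $\Delta^{m-1;k}$ on its suffix, together with a careful match, via \eqref{E:RE}, with the two beads that appear in $r^{(k)}$. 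Equivalently, and perhaps more cleanly, one can bypass the formulas and run all of \eqref{E:JandR} graphically: slide the extra crossing $r_i$ up through the comb diagram defining $J^{n;k}$; by the YBE it passes every crossing, by \eqref{E:DeltaandR} it leaves the far read-off colors invariant, and at the beads it transforms the two affected read-off colors precisely as $r^{(k)}$ prescribes, the equality of the diagram tops being the reflection equation already noted after Fig.~\ref{P:GDsolution}.
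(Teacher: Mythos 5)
Your closing remark --- slide the single crossing $r_i$ up through the comb diagram, using the YBE to pass the other strands and the RE to pass the pair of beads on strands $i,i+1$, reading off \eqref{E:JandR} below the beads and \eqref{E:DeltaandR} at the top --- is essentially the paper's proof (Fig.~\ref{P:GuitarBr}), except that the paper runs the two identities in the opposite logical order: \eqref{E:DeltaandR} is extracted as a by-product of the diagram manipulation establishing \eqref{E:JandR}, not used as an input to it. Your primary route (prove \eqref{E:DeltaandR} first by induction on $n$, then verify \eqref{E:JandR} coordinatewise) is a legitimate alternative, and the parts you do carry out are sound: the base case $\Delta^{2;k}r_1=r_1\Delta^{2;k}$ is indeed the RE, and the coordinates $j>i+1$ and $j<i$ are handled correctly, the latter via \eqref{E:DeltaandR} together with the YBE-consequence $\rho_{\rho_b(a)}\rho_{\lambda_a(b)}=\rho_b\rho_a$ which makes $\rho_{\overline{c}}$ invariant under $r_l$.

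However, the two steps you flag as "the main obstacle" and "the principal difficulty" are exactly the ones carrying the content, and they are left unproved. (1) In the induction for \eqref{E:DeltaandR}, the natural recursion $\Delta^{n;k}=k_n\,(r_{n-1}\cdots r_1)\,\Delta'$, with $\Delta'$ the copy of $\Delta^{n-1;k}$ on strands $2,\dots,n$, disposes of all $i\ge 2$ (since $r_{n-i}$ then commutes with $k_n$), but it does \emph{not} close the case $i=1$: $r_1$ does not commute with $\Delta'$, and this is precisely the case where the crossing must be pushed through the two beads via the RE. Without an explicit argument here the induction is incomplete. (2) For the coordinates $j\in\{i,i+1\}$ you need the peeling identity $\Delta^{m;k}(b_1,\overline{b})=\bigl(\lambda_{b_1}(\Delta^{m-1;k}(\overline{b})),\,k\rho_{\Delta^{m-1;k}(\overline{b})}(b_1)\bigr)$ --- the $p=1$ case of \eqref{E:Dproduct} --- before the local picture of Fig.~\ref{P:GDsolution} can be matched with Definition~\ref{D:k-sol}; note that in the paper this identity appears only afterwards (Proposition~\ref{P:Almost1cocycle}), so you would have to establish it independently first. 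Neither gap is fatal --- both can be filled, and the graphical argument you sketch at the end fills them simultaneously --- but as written the algebraic route is a plan rather than a proof. The paper's single-diagram argument is precisely the device that makes both pieces of bookkeeping unnecessary.
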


This theorem directly yields the following fundamental properties of generalised derived solutions:
\begin{cor}\label{C:GenDerived}
Let $(X,r)$ be an RND solution, and $k$ its reflection. Then:
\begin{enumerate}
\item $r^{(k)}$ is an RND YBE solution;
\item $r$ and $r^{(k)}$ induce isomorphic braid monoid/group actions;
\item the relation $r^s=r^t$ for some $s>t\geq 0$ is equivalent to $(r^{(k)})^s=(r^{(k)})^t$; in particular, $r$ is invertible/idempotent if and only if $r^{(k)}$ is so; 
\item $r$ is invertible if and only if $r^{(k)}$ is so. 
\end{enumerate}
\end{cor}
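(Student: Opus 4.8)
The plan is to deduce all four parts from Theorem~\ref{T:GenDerived} together with the bijectivity of the $k$-guitar maps established in the preceding lemma; beyond a single reading of the formula in Definition~\ref{D:k-sol} to verify right-non-degeneracy, essentially no computation is needed. The one observation driving everything is that, for each fixed $n$, the intertwining relation~\eqref{E:JandR} combined with the invertibility of $J^{n;k}$ rewrites as
\[
r^{(k)}_i = J^{n;k}\, r_i\, (J^{n;k})^{-1}, \qquad 1 \leq i < n.
\]
Thus the operators $r^{(k)}_1,\dots,r^{(k)}_{n-1}$ on $X^n$ are obtained from $r_1,\dots,r_{n-1}$ by a \emph{single simultaneous} conjugation by the bijection $J^{n;k}$. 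Since $\phi \mapsto J^{n;k}\phi (J^{n;k})^{-1}$ is a monoid automorphism of the endomaps of $X^n$, every property of $r$ captured by a relation among the $r_i$, or invariant under conjugation by a bijection, transfers verbatim to $r^{(k)}$.

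For part~(1) I would apply the displayed identity with $n=3$, $J := J^{3;k}$. Conjugating the Yang--Baxter relation~\eqref{E:YBE} for $r$ yields
\[
r^{(k)}_1 r^{(k)}_2 r^{(k)}_1 = J\, r_1 r_2 r_1\, J^{-1} = J\, r_2 r_1 r_2\, J^{-1} = r^{(k)}_2 r^{(k)}_1 r^{(k)}_2,
\]
so $r^{(k)}$ satisfies~\eqref{E:YBE}. Right-non-degeneracy, by contrast, is \emph{not} preserved by conjugation and must be read off the explicit formula: writing $r^{(k)}(a,b)=(\lambda^{(k)}_a(b),\rho^{(k)}_b(a))$, Definition~\ref{D:k-sol} gives $\rho^{(k)}_b=\rho_b\rho_{k(b)}^{-1}$, a composite of bijections, hence bijective for every $b$.

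Part~(2) is then immediate: the family $\{J^{n;k}\}_n$ simultaneously conjugates every braid generator $r_i$ into $r^{(k)}_i$, so conjugation by $J^{n;k}$ carries the whole submonoid generated by the $r_i$ isomorphically onto that generated by the $r^{(k)}_i$, and the equivariance~\eqref{E:JandR} makes each $J^{n;k}$ an isomorphism of braid-monoid actions on $X^n$; the braid-\emph{group} statement follows once the generators are known to be invertible, i.e. after part~(4). For parts~(3) and~(4) I would specialise to $n=2$, where the identity reads $r^{(k)}=J r J^{-1}$ with $J:=J^{2;k}$ a bijection of $X^2$. Raising to powers gives $(r^{(k)})^m = J\, r^m\, J^{-1}$ for all $m\geq 0$ (with $m=0$ giving $\Id$), whence $r^s=r^t \iff (r^{(k)})^s=(r^{(k)})^t$; the cases $(s,t)=(2,0)$ and $(2,1)$ recover involutiveness and idempotence. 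Finally, conjugation by a bijection preserves bijectivity, so $r$ is invertible if and only if $r^{(k)}$ is, giving part~(4).

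Because Theorem~\ref{T:GenDerived} does all the heavy lifting, the corollary itself presents no serious obstacle; the argument is bookkeeping with conjugation. The two points I would watch are precisely the ones flagged above: the non-conjugation-invariance of right-non-degeneracy, which forces a direct appeal to the formula rather than to the abstract conjugacy, and the logical ordering within part~(2), where the braid-group action only makes sense once invertibility of the generators (part~(4)) has been secured, whereas the braid-monoid action and the YBE statement require no such hypothesis.
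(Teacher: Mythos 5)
Your proposal is correct and is exactly the unpacking the paper intends: the corollary is stated as a direct consequence of Theorem~\ref{T:GenDerived}, and your conjugation argument $r^{(k)}_i = J^{n;k} r_i (J^{n;k})^{-1}$, supplemented by reading $\rho^{(k)}_b=\rho_b\rho_{k(b)}^{-1}$ off Definition~\ref{D:k-sol} for right-non-degeneracy, is the intended route. Your two flagged caveats (RND not being conjugation-invariant, and the braid-group half of part~(2) needing part~(4)) are both apt.
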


\begin{proof}[Proof of Theorem~\ref{T:GenDerived}]
A graphical proof is given in Fig.~\ref{P:GuitarBr}. 

\begin{figure}[h]
\centering
\begin{tikzpicture}[xscale=0.75,yscale=1,>=latex]
 \draw [rounded corners=10] (0,-0.5) -- (0,.25) -- (5,2.75) -- (4.5,3);
 \draw [line width=4pt,white, rounded corners=10] (1,0) -- (5,2) -- (3,3); 
 \draw [rounded corners=10] (2,0.5) -- (5,2) -- (3,3);
 \draw [line width=4pt,white, rounded corners=10] (2,0) -- (5,1.5) -- (2,3); 
 \draw [rounded corners=10] (1,-0.5) -- (2,0) -- (5,1.5) -- (2,3); 
 \draw [line width=4pt,white, rounded corners=10] (4,0) -- (5,0.5) -- (0,3);   
 \draw [rounded corners=10] (4,-0.5) -- (4,0) -- (5,0.5) -- (0,3);  
 \draw [line width=4pt,white, rounded corners=10] (2,-0.5) -- (1,0) -- (2,0.5); 
 \draw [rounded corners=10] (2,-0.5) -- (1,0) -- (2,0.5); 
 \draw [myviolet, dashed] (-.8,0)--(5,0); 
 \node at (-.7,-0.5) [myviolet] {${r_2}$};  
 \node at (-.7,1.5) [myviolet]  {${J}$}; 
 \node at (0,-0.9) [above,mygreen] {$\scriptstyle{a_1}$};  
 \node at (1,-0.9) [above,mygreen] {$\scriptstyle{a_2}$}; 
 \node at (2,-0.9) [above,mygreen] {$\scriptstyle{a_3}$}; 
 \node at (4,-0.9) [above,mygreen] {$\scriptstyle{a_4}$};
 \node at (4.5,2.55) [right] {$\scriptstyle{\color{myblue}J_1(r_2(\overline{a}))}$};   
 \node at (4.5,1.8) [right] {$\scriptstyle{\color{myblue}J_2(r_2(\overline{a}))}$};   
 \node at (4.5,1.3) [right] {$\scriptstyle{\color{myblue}J_3(r_2(\overline{a}))}$};  
 \node at (4.5,0.3) [right] {$\scriptstyle{\color{myblue}J_4(r_2(\overline{a}))}$};  
 \fill (4.75,2.75) circle (.08); 
 \fill (4.75,2) circle (.08);  
 \fill (4.75,1.5) circle (.08);  
 \fill (4.75,.5) circle (.08); 
 \draw [latex->, rounded corners=10,myviolet] (2,-1) -- (2,-1.5)node[right] {YBE} -- (2,-2);
\end{tikzpicture} 
\hspace*{20pt}
\begin{tikzpicture}[xscale=0.75,yscale=1,>=latex]
 \draw [rounded corners=10] (0,-0.5) -- (0,0) -- (5,2.5) -- (4,3);
 \draw [line width=4pt,white, rounded corners=10] (2,-0.5) -- (5,1) -- (2,2.5) -- (3,3);
 \draw [rounded corners=10] (2,-0.5) -- (5,1) -- (2,2.5) -- (3,3);
 \draw [line width=4pt,white, rounded corners=10] (2,0) -- (5,1.5) -- (2,3); 
 \draw [rounded corners=10] (1,-0.5) -- (2,0) -- (5,1.5) -- (2,3); 
 \draw [line width=4pt,white, rounded corners=10] (4,-0.5) -- (5,0) -- (0,2.5) -- (0,3);   
 \draw [rounded corners=10] (4,-0.5) -- (5,0) -- (0,2.5) -- (0,3);    
 \draw [line width=4pt,white, rounded corners=10] (4.75,1.125) -- (4.25,1.375);   
 \draw [rounded corners=10] (4.75,1.125) -- (4.25,1.375); 
 \draw [myviolet, dashed] (-0.5,2.65)--(5.5,2.65); 
 \node at (.5,1.5) [myviolet]  {${J}$}; 
 \node at (0,-0.9) [above,mygreen] {$\scriptstyle{a_1}$};  
 \node at (1,-0.9) [above,mygreen] {$\scriptstyle{a_2}$}; 
 \node at (2,-0.9) [above,mygreen] {$\scriptstyle{a_3}$}; 
 \node at (4,-0.9) [above,mygreen] {$\scriptstyle{a_4}$};
 \node at (4.5,2.25) [right] {$\scriptstyle{\color{red} J_1(\overline{a})}$};   
 \node at (4.5,1.3) [right] {$\scriptstyle{\color{red} J_2(\overline{a})}$};  
 \node at (4.5,.8) [right] {$\scriptstyle{\color{red} J_3(\overline{a})}$};   
 \node at (4.5,-.2) [right] {$\scriptstyle{\color{red} J_4(\overline{a})}$};   
 \fill (4.75,2.5) circle (.08); 
 \fill (4.75,1) circle (.08);  
 \fill (4.75,1.5) circle (.08);  
 \fill (4.75,0) circle (.08);  
 \draw [latex->, rounded corners=10,myviolet] (2,-1) -- (2,-1.5)node[right] {YBE} -- (2,-2);
\end{tikzpicture}
\begin{tikzpicture}[xscale=0.75,yscale=1,>=latex]
 \draw [rounded corners=10] (0,-0.5) -- (0,.25) -- (5,2.75) -- (4.5,3);
 \draw [line width=4pt,white, rounded corners=10] (1,0) -- (5,2) -- (3,3); 
 \draw [rounded corners=10] (2,-0.5) -- (5,1) -- (4,1.5) -- (5,2) -- (3,3);
 \draw [line width=4pt,white, rounded corners=10] (2,0) -- (5,1.5) -- (2,3); 
 \draw [rounded corners=10] (1,-0.5) -- (2,0) -- (5,1.5) -- (2,3); 
 \draw [line width=4pt,white, rounded corners=10] (4,-0.5) -- (5,0) -- (0,2.5) -- (0,3);   
 \draw [rounded corners=10] (4,-0.5) -- (5,0) -- (0,2.5) -- (0,3);  
 \draw [line width=4pt,white]  (4.625,1.1875) -- (4.375,1.3125);    
 \draw (4.625,1.1875) -- (4.375,1.3125); 
 \node at (0,-0.9) [above,mygreen] {$\scriptstyle{a_1}$};  
 \node at (1,-0.9) [above,mygreen] {$\scriptstyle{a_2}$}; 
 \node at (2,-0.9) [above,mygreen] {$\scriptstyle{a_3}$}; 
 \node at (4,-0.9) [above,mygreen] {$\scriptstyle{a_4}$};
 \node at (4.5,2.55) [right] {$\scriptstyle{{\color{myblue}J_1(r_2(\overline{a}))}={\color{red} J_1(\overline{a})}}$};   
 \node at (4.5,1.8) [right] {$\scriptstyle{{\color{myblue}J_2(r_2(\overline{a}))}={\color{red} J_3(\overline{a})'}}$};   
 \node at (4.5,1.3) [right] {$\scriptstyle{{\color{myblue}J_3(r_2(\overline{a}))}={\color{red} J_2(\overline{a})'}}$}; 
 \node at (4.5,-.2) [right] {$\scriptstyle{{\color{myblue}J_4(r_2(\overline{a}))}={\color{red} J_4(\overline{a})}}$};  
 \fill (4.75,2.75) circle (.08); 
 \fill (4.75,2) circle (.08);  
 \fill (4.75,1.5) circle (.08);  
 \fill (4.75,0) circle (.08);   
 \path [fill=lightgrey, fill opacity=0.2] (4.75,.5)--(3,1.375)--(4.75,2.25)--(7.6,2.25)--(7.6,.5)--(4.75,.5);
\end{tikzpicture}
\hspace*{3pt}
\begin{tikzpicture}[xscale=0.75,yscale=1,>=latex]
 \draw [rounded corners=10] (0,-0.5) -- (0,.25) -- (5,2.75) -- (4.5,3); \draw [line width=4pt,white, rounded corners=10] (1,0) -- (5,2) -- (3,3); 
 \draw [rounded corners=10] (2,-0.5) -- (5,1) -- (4,1.5) -- (5,2) -- (3,3);
 \draw [line width=4pt,white, rounded corners=10] (2,0) -- (5,1.5) -- (2,3); 
 \draw [rounded corners=10] (1,-0.5) -- (2,0) -- (5,1.5) -- (2,3); 
 \draw [line width=4pt,white, rounded corners=10] (4,-0.5) -- (5,0) -- (0,2.5) -- (0,3);   
 \draw [rounded corners=10] (4,-0.5) -- (5,0) -- (0,2.5) -- (0,3);  
 \draw [line width=4pt,white]  (4.625,1.1875) -- (4.375,1.3125);    
 \draw (4.625,1.1875) -- (4.375,1.3125); 
 \node at (0,-0.9) [above,mygreen] {$\scriptstyle{a_1}$};  
 \node at (1,-0.9) [above,mygreen] {$\scriptstyle{a_2}$}; 
 \node at (2,-0.9) [above,mygreen] {$\scriptstyle{a_3}$}; 
 \node at (4,-0.9) [above,mygreen] {$\scriptstyle{a_4}$};
 \node at (4.5,2.55) [right] {$\scriptstyle{\color{red} J_1(\overline{a})}$};     
 \node at (4.5,1.3) [right] {$\scriptstyle{\color{red} J_2(\overline{a})}$};  
 \node at (4.5,.8) [right] {$\scriptstyle{\color{red} J_3(\overline{a})}$};   
 \node at (4.5,-.2) [right] {$\scriptstyle{\color{red} J_4(\overline{a})}$};  
 \fill (4.75,2.75) circle (.08); 
 \fill (4.75,1.5) circle (.08); 
 \fill (4.75,1) circle (.08);    
 \fill (4.75,0) circle (.08);   
 \draw [latex->, rounded corners=10,myviolet] (-1.5,1.5) -- (-.5,1.5)node[above] {$\scriptstyle{\text{definition of }r^{(k)}}$}node[below] {$\scriptstyle{\text{and RE}}$} -- (.5,1.5);
 \path [fill=lightgrey, fill opacity=0.2] (4.75,.5)--(3,1.375)--(4.75,2.25)--(5.6,2.25)--(5.6,.5)--(4.75,.5);
\end{tikzpicture}
   \caption{The entwining relation $J^{n;k}r_i=r^{(k)}_iJ^{n;k}$ (here $n=4$, $i=2$) is established by comparing the colors just below the beads in the bottom left diagram as calculated from the upper left (blue labels) and the two right diagrams (red labels). The relation $\Delta^{n;k}r_i = r_{n-i}\Delta^{n;k}$ is established by comparing the top colors of the two upper diagrams.}\label{P:GuitarBr}
\end{figure}
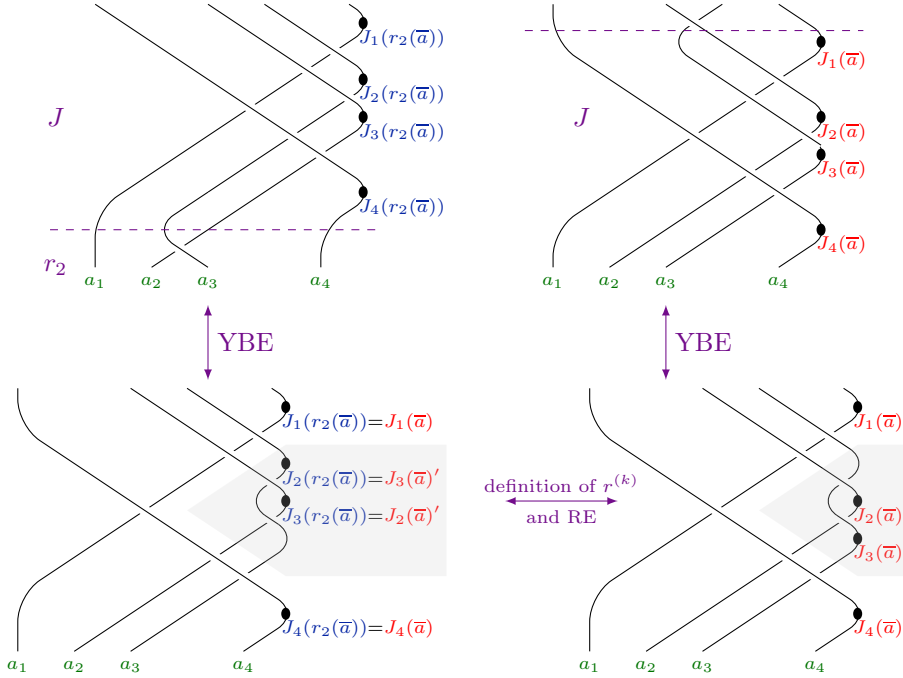

We start from the upper left diagram representing $J^{n;k}r_i$. The bottom colors $\overline{a}$ are fixed, and uniquely determine all the remaining colors.  We then pull the crossing corresponding to $r_i$ to the right, and get the lower left diagram. Due to the YBE, this does not alter the (blue) colors just below the beads. Then we make one bead slide around its neighbouring bead in the grey zone, and obtain the lower right diagram. This preserves the colors outside the grey zone because of the RE. And for the colors just below the beads, this operation boils down to applying $r^{(k)}$ (directed from right to left). Now, one can pull the beadless crossing from the grey zone upward, and get the upper right diagram. Once again, the YBE guarantees that the the (red) colors just below the beads remain unchanged. But from the upper right diagram, one sees that these colors are precisely $J^{n;k}(\overline{a})$.

The argument above shows that the colors at the top of the two upper diagrams coincide. But these are precisely the left and the right sides of
\eqref{E:DeltaandR}.
\end{proof}

From the proof it is clear that \eqref{E:DeltaandR} remains true for not necessarily RND solutions.

We have thus learnt that any RND solution comes with a whole family of closely related solutions parametrised by its reflections. It is natural to ask what happens if we iterate this construction, and look at the generalised derived solutions of generalised derived solutions. Computer experiments show that new iterations may produce new solutions, but the situation tends to stabilise fast. One of the reasons is the following result:

\begin{thm}\label{T:StrShelf}
Let $(X,r)$ be an RND solution, and $k$ its reflection. Then the structure shelf operations $\op$ and $\op_k$ for $r$ and $r^{(k)}$ respectively coincide. If moreover $k$ and $r$ are both invertible, then one has 
\[k(b \lop_k a)=k(a) \op k(b)\] 
for all $a,b \in X$. Here $\lop_k$ is the left structure shelf operations for $r^{(k)}$.
\end{thm}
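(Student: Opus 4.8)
The plan is to treat the two assertions separately, working throughout with the components $\lambda^{(k)}_a,\rho^{(k)}_b$ of $r^{(k)}$, which one reads off directly from Definition~\ref{D:k-sol}:
\[\rho^{(k)}_b=\rho_b\rho_{k(b)}^{-1},\qquad \lambda^{(k)}_a(b)=\rho_{k(\rho^{(k)}_b(a))}\,\lambda_{\rho_{k(b)}^{-1}(a)}(b).\]
With these in hand, the first assertion is a clean telescoping computation. Substitute into the right structure shelf $a\op_k b=\rho^{(k)}_b\,\lambda^{(k)}_{(\rho^{(k)}_a)^{-1}(b)}(a)$, use $(\rho^{(k)}_a)^{-1}=\rho_{k(a)}\rho_a^{-1}$, and set $c=\rho_{k(a)}\rho_a^{-1}(b)$. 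The two decisive cancellations are $\rho_{k(a)}^{-1}(c)=\rho_a^{-1}(b)$ and $\rho^{(k)}_a(c)=b$, which collapse the inner term to $\lambda^{(k)}_c(a)=\rho_{k(b)}\,\lambda_{\rho_a^{-1}(b)}(a)$; the outer factor $\rho^{(k)}_b=\rho_b\rho_{k(b)}^{-1}$ then absorbs the surviving $\rho_{k(b)}$, leaving exactly $\rho_b\,\lambda_{\rho_a^{-1}(b)}(a)=a\op b$. Every $k$-decorated factor cancels in pairs, so $\op_k=\op$.

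For the second assertion I would first record the left structure shelf of a LND solution, which in components reads $a\lop b=\lambda_a\,\rho_{\lambda_b^{-1}(a)}(b)$ (the left--right mirror of $\op$); invertibility of $r$, equivalently of $r^{(k)}$ by Corollary~\ref{C:GenDerived}, is what makes $\lop_k$ well defined. As a sanity check I would run $k=\Id_X$: here $\rho^{(\Id)}=\Id$ kills the retrieval, and $b\lop_\Id a=\lambda^{(\Id)}_b(a)=a\op b$, matching the claimed identity in the trivial case and confirming the conventions.

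The core of the argument I would then carry out graphically, in the spirit of Figure~\ref{P:GuitarBr}. The right-hand side $k(a)\op k(b)$ is the standard shelf tangle (one $r$-crossing together with a $\rho^{-1}$-retrieval) fed with the beaded inputs $k(a),k(b)$, whereas $k(b\lop_k a)$ is the mirror-oriented left-shelf tangle built from $r^{(k)}$-crossings, each of which is a plain crossing dressed with beads exactly as in Figure~\ref{P:GDsolution}, capped by one further bead on the output. I would expand all $r^{(k)}$-crossings into crossings plus beads, then transport beads across the diagram using the reflection equation as bead-sliding (Figure~\ref{P:RE}) together with $k k^{-1}=\Id$, until only two beads remain, sitting on the two inputs, and the crossing pattern has straightened into the plain right-shelf tangle.

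The main obstacle is precisely this bead bookkeeping. Because $\lop_k$ is mirror-oriented and uses $\lambda^{-1}$ and $\rho^{-1}$ retrievals, one must slide beads across inverse crossings, applying the reflection equation in its inverted form; invertibility of both $k$ and $r$ is exactly what legitimises these moves. The delicate point is to verify that, after all cancellations, the single output bead has migrated into beads on both inputs (producing $k(a)$ and $k(b)$) with no stray bead left inside — equivalently, on the algebraic side, that the $r^2$-expression and the various $k$-conjugations which appear when one pushes the computation by hand reassemble, via a YBE component identity, into $\rho_{k(b)}\,\lambda_{\rho_{k(a)}^{-1}(k(b))}(k(a))$. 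I expect the graphical route to render transparent what the algebra leaves opaque.
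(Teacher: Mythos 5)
Your proof of the first assertion is correct and complete. The component formulas $\rho^{(k)}_b=\rho_b\rho_{k(b)}^{-1}$ and $\lambda^{(k)}_a(b)=\rho_{k(\rho^{(k)}_b(a))}\lambda_{\rho_{k(b)}^{-1}(a)}(b)$ are read off Definition~\ref{D:k-sol} correctly, and the cancellation $a\op_k b=\rho_b\rho_{k(b)}^{-1}\,\rho_{k(b)}\lambda_{\rho_a^{-1}(b)}(a)=a\op b$ goes through exactly as you describe. This is an algebraic rendering of what the paper does graphically by reading $b\op_k a'=b''=b\op a'$ off Fig.~\ref{P:GDsolutionStrShelf}, and it is if anything more self-contained, since it proves the identity for all pairs at once and dispenses with the paper's extra step of reconstructing the starting color $a$ from $a'$ and $b$.

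The second assertion, however, is not proved. You set up the correct formula for $\lop_k$, check the case $k=\Id$, and then describe a bead-sliding strategy whose decisive step you explicitly leave open (``the delicate point is to verify that \dots the single output bead has migrated into beads on both inputs''). That verification \emph{is} the content of the claim, so this is a gap, not a proof. Moreover, the plan you sketch --- expanding every $r^{(k)}$-crossing, sliding beads across inverse crossings, invoking $kk^{-1}=\Id$ --- heads in a harder direction than necessary: no inverse of $k$ ever appears in the paper's argument. The paper takes the single three-crossing diagram computing two successive applications of $r^{(k)}$ to $(a,b)$, i.e.\ $(a,b)\mapsto(b',a')\mapsto(a'',b'')$, and moves the two beads through three positions related by the RE. In the left-most configuration the right-hand edge carries, above the beads, the consecutive colors $k(a)$, $k(b')$, $k(a'')$ across a beadless two-crossing sub-diagram, which by the definition of $\op$ for the original $r$ gives $k(a)\op k(b')=k(a'')$; the right-most configuration gives $a''=b'\lop_k a$. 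Equating the two readings yields $k(b'\lop_k a)=k(a)\op k(b')$ with no bead cancellation at all. Invertibility of $k$ and $r$ enters only at the end, to show that the pairs $(a,b')$ produced this way exhaust $X^2$: one must recover the starting color $b$ from $a$ and $b'$ by propagating $k(a)$ and $k(b')$ backwards through crossings and beads. To complete your proof, either reproduce this two-readings-of-one-diagram argument, or push your component formulas through the algebra; the surjectivity step just described is the only place where invertibility is genuinely needed, and your sketch currently does not isolate it.
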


Recall that the left structure shelf operations for an LND solution is given by the formula
\[b \lop a = \lambda_b \rho_{\lambda_a^{-1}(b)}(a).\]
In particular, under the assumptions of the proposition, the solution $r^{(k)}$ is automatically LND. Note that, according to~\cite{LV_StrGroups}, the right and left structure shelves of an invertible non-degenerate solution are isomorphic.

\begin{proof} 
In Fig.~\ref{P:GDsolutionStrShelf} we showed a double application of $r^{(k)}$. 

\begin{figure}[h]
\centering
\begin{tikzpicture}[xscale=1,yscale=.9]
\draw [rounded corners](0,-.4)--(0,0.25)--(0.4,0.4);
\draw [rounded corners](0.6,0.6)--(1,0.75)--(1,1.25)--(0,1.75)--(0,2.25)--(0.4,2.4);
\draw [rounded corners](0.6,2.6)--(1,2.75)--(1,3.4);
\draw [rounded corners](1,-.4)--(1,0.25)--(0,0.75)--(0,1.25)--(0.4,1.4);
\draw [rounded corners] (0.6,1.6)--(1,1.75)--(1,2.25)--(0,2.75)--(0,3.4);;
\fill (1,1) circle (.1);
\fill (1,0) circle (.1);
\node  [mygreen,right] at (.95,-0.3){\large $b$};
\node  [mygreen,right] at (.95,.8){\large $a$};
\node  [below] at (0,-.4){$c$};
\node  [myred,right] at (.95,3.3){ $k(a'')$};
\node  [myviolet,right] at (.95,2.3){$k(b')$};
\node  [right] at (.95,1.3){$k(a)$};
\node  at (2.5,1.5){\Large $\leadsto$};
\node  at (2.9,.5){};
\node  at (.5,-1.2){$r^{(k)} \colon (a,b)$};
\node  at (2.5,-1.2){$\longmapsto$};
\end{tikzpicture}
\begin{tikzpicture}[xscale=1,yscale=.9]
\draw [rounded corners](0,-.4)--(0,0.25)--(0.4,0.4);
\draw [rounded corners](0.6,0.6)--(1,0.75)--(1,1.25)--(0,1.75)--(0,2.25)--(0.4,2.4);
\draw [rounded corners](0.6,2.6)--(1,2.75)--(1,3.4);
\draw [rounded corners](1,-.4)--(1,0.25)--(0,0.75)--(0,1.25)--(0.4,1.4);
\draw [rounded corners] (0.6,1.6)--(1,1.75)--(1,2.25)--(0,2.75)--(0,3.4);;
\fill (1,1) circle (.1);
\fill (1,2) circle (.1);
\node  [myblue,right] at (.95,1.8){\large $b'$};
\node  [myviolet,right] at (.95,2.3){$k(b')$};
\node  [myblue,right] at (.95,.8){\large $a'$};
\node  [mygreen,below] at (0,-.4){\large $c$};
\node  [mygreen,below] at (1,-.4){\large $b$};
\node  [myred,right] at (.95,3.3){ $k(a'')$};
\node  at (2.3,1.5){\Large $\leadsto$};
\node  at (2.7,.5){};
\node  at (.5,-1.2){$(b',a')$};
\node  at (2.2,-1.2){$\longmapsto$};
\end{tikzpicture}
\begin{tikzpicture}[xscale=1,yscale=.9]
\draw [rounded corners](0,-.4)--(0,0.25)--(0.4,0.4);
\draw [rounded corners](0.6,0.6)--(1,0.75)--(1,1.25)--(0,1.75)--(0,2.25)--(0.4,2.4);
\draw [rounded corners](0.6,2.6)--(1,2.75)--(1,3.4);
\draw [rounded corners](1,-.4)--(1,0.25)--(0,0.75)--(0,1.25)--(0.4,1.4);
\draw [rounded corners] (0.6,1.6)--(1,1.75)--(1,2.25)--(0,2.75)--(0,3.4);;
\fill (1,3) circle (.1);
\fill (1,2) circle (.1);
\node  [myblue,right] at (.95,.9){\large $a'$};
\node  [myblue,right] at (.95,1.8){\large $b''$};
\node  [myblue,right] at (.95,2.8){\large $a''$};
\node  [myred,right] at (.95,3.3){ $k(a'')$};
\node  [mygreen,below] at (0,-.4){\large $c$};
\node  [mygreen,below] at (1,-.4){\large $b$};
\node  at (.5,-1.2){$(a'',b'')$};
\end{tikzpicture}
\caption{Comparing structure shelves for $r$ and $r^{(k)}$: the diagrams yield $b \op_k a' = b''= b \op a'$, $b' \lop_k a =a''$ and $k(a) \op k(b') = k(a'')$.}\label{P:GDsolutionStrShelf}
\end{figure}

As usual, the starting colors in each diagram are in green: $a$ and $b$ for the left one, and $c$ and $b$ for the remaining ones. The definition of $r^{(k)}$ yields 
\[r^{(k)} \colon (a,b) \mapsto (b',a') \mapsto (a'',b'').\] By the definition of structure shelves, this implies 
\[b \op_k a' =b'' \qquad \text{and} \qquad b' \lop_k a =a''.\] 
On the other hand, by the RE, the coloring changes due to a sliding bead are local. Hence the upper right colors are the same on the three diagrams. From the right diagram we see that it is $k(a'')$. In the same way, the color $k(b')$ can be transported from the middle to the left diagram, and the color $a'$ from the middle to the right diagram. Finally, from the right diagram and the definition of the right structure shelf we conclude 
\[b \op a' = b''= b \op_k a'.\] 
It remains to prove that this identity holds for all $b,a' \in X$. For this one needs to deduce the starting color $a$ from $a'$ and $b$. But this is easy:
\[a=\rho_{k(b)}(c)=\rho_{k(b)}\rho_b^{-1}(a').\]

Similarly, from the left diagram follows
\[k(a) \op k(b') = k(a'') =k(b' \lop_k a).\]
The starting color $b$ can be deduced from $a$ and $b'$ only when $k$ and $r$ are invertible. Indeed, the colors $k(a)$ and $k(b')$ on the left diagram can be propagated everywhere, and uniquely determine the remaining colors, in particular the bottom right color $b$. 
\end{proof}

\section{Structure monoids for generalised derived solutions}

The classical guitar maps yield a bijective monoid $1$-cocycle 
\[M(X,r) \to A(X,r)=M(X,r^{(\Id)}).\] 
In this section, we will see that the $k$-guitar maps induce a bijection $M(X,r) \to M(X,r^{(k)})$, 
satisfying a more general cocycle-like property \eqref{E:JproductShort}. Moreover, each structure monoid $M(X,r^{(k)})$ acts on the set $X$.

Recall the definition \eqref{E:StrMonoid} of the structure monoid of $(X,r)$. It can be reinterpreted as the quotient of the free monoid on $X$ by the relations $r_i(a_1,\ldots,a_n)=(a_1,\ldots,a_n)$ for all $n$, all $i<n$, and all $a_j \in X$. Thus the entwining relations $J^{n;k}r_i=r^{(k)}_iJ^{n;k}$ and $\Delta^{n;k}r_i = r_{n-i}\Delta^{n;k}$ from Theorem~\ref{T:GenDerived}, and the invertibility of $J^{n;k}$, imply

\begin{pro}\label{P:GuitarForMonoids}
Let $(X,r)$ be an RND solution, and $k$ its reflection. The $k$-guitar maps $J^{n;k}$ induce a bijection of $\Z_{\geq 0}$-graded sets 
\[\overline{J^{(k)}} \colon M(X,r) \to M(X,r^{(k)}).\]
Further, the $k$-Garside maps $\Delta^{n;k}$ induce a map of $\Z_{\geq 0}$-graded sets 
\[\overline{\Delta^{(k)}} \colon M(X,r) \to M(X,r).\]
\end{pro}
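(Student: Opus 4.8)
The plan is to run a purely formal descent argument built on the reformulation of the structure monoid recalled just above the statement: $M(X,r)$ is the free monoid on $X$ modulo the congruence $\sim_r$ generated by the moves $\bar a \leftrightarrow r_i(\bar a)$, and likewise $M(X,r^{(k)})$ is the free monoid modulo $\sim_{r^{(k)}}$. Since $r_i$ and $r^{(k)}_i$ preserve word length, both monoids are $\Z_{\geq 0}$-graded, the degree-$n$ component being $X^n$ modulo the equivalence relation $\sim_r^{(n)}$ generated by the $r_i$ with $i<n$. As $J^{n;k}$ and $\Delta^{n;k}$ act on $X^n$, it suffices to work one degree at a time and show that each descends to the appropriate quotient.

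First I would check that $J^{n;k}$ sends $\sim_r^{(n)}$-classes to $\sim_{r^{(k)}}^{(n)}$-classes. Because $\sim_r^{(n)}$ is generated by the single moves $\bar a \leftrightarrow r_i(\bar a)$, it is enough to verify $J^{n;k}(\bar a) \sim_{r^{(k)}} J^{n;k}(r_i(\bar a))$ for each $i<n$. But the entwining relation \eqref{E:JandR} gives $J^{n;k}(r_i(\bar a)) = r^{(k)}_i(J^{n;k}(\bar a))$, and the right-hand side is $\sim_{r^{(k)}}$-equivalent to $J^{n;k}(\bar a)$ by the very definition of $\sim_{r^{(k)}}$. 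Hence $J^{n;k}$ descends to a well-defined map $\overline{J^{(k)}}$ on each graded component, manifestly degree-preserving.

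For bijectivity I would invoke the Lemma: each $J^{n;k}$ is a bijection of $X^n$. Composing \eqref{E:JandR} on both sides with $(J^{n;k})^{-1}$ yields the reversed entwining relation $(J^{n;k})^{-1} r^{(k)}_i = r_i (J^{n;k})^{-1}$, so the same argument shows that $(J^{n;k})^{-1}$ descends to a map $M(X,r^{(k)}) \to M(X,r)$. These two descended maps lift to the mutually inverse bijections $J^{n;k}, (J^{n;k})^{-1}$ on each $X^n$, hence are themselves mutually inverse; thus $\overline{J^{(k)}}$ is a bijection of graded sets. The claim for $\Delta^{n;k}$ is handled identically, using \eqref{E:DeltaandR} in place of \eqref{E:JandR}: since $1 \leq i<n$ forces $1 \leq n-i<n$, the relation $\Delta^{n;k}(r_i(\bar a)) = r_{n-i}(\Delta^{n;k}(\bar a))$ again lands in a single $\sim_r$-move, so $\Delta^{n;k}$ descends to $\overline{\Delta^{(k)}} \colon M(X,r) \to M(X,r)$. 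Here no bijectivity is asserted or required, which is consistent with the fact that $\Delta^{n;k}$ need not be invertible.

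I do not anticipate a genuine obstacle: the substance of the proposition is already carried by Theorem~\ref{T:GenDerived} and the bijectivity Lemma, and everything collapses to the observation that ``descends to the quotient'' need only be tested on the generating relations. The single point that deserves a moment of care is the reinterpretation of the congruence $\sim_r$ in each fixed degree as being generated by the adjacent moves $r_i$ — that is, that a length-two defining relation applied inside a longer word is precisely an application of some $r_i$ — but this is exactly the reformulation stated before the proposition, so it can be cited directly rather than reproved.
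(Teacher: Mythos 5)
Your proposal is correct and is exactly the argument the paper intends: the text immediately preceding the proposition reinterprets $M(X,r)$ as the free monoid modulo the moves $\bar a \leftrightarrow r_i(\bar a)$ and then derives the statement from the entwining relations \eqref{E:JandR}, \eqref{E:DeltaandR} and the bijectivity of $J^{n;k}$, which is precisely your descent argument spelled out in detail. No gaps.
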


The map $\overline{J^{(k)}}$ is not necessarily a monoid $1$-cocycle. It interacts with the products in $M(X,r)$ and in $M(X,r^{(k)})$ in a more intricate way:
 
\begin{pro}\label{P:Almost1cocycle}
Let $(X,r)$ be a solution, and $k$ its reflection. Take $p,q \in \N$, $\overline{a} \in X^p$, $\overline{b} \in X^q$. One has:  
\begin{align}
J^{p+q;k}(\overline{a}\overline{b}) &= J^{p;k}(\rho_{\Delta^{q;k}(\overline{b})}(\overline{a}))\ J^{q;k}(\overline{b}),\label{E:Jproduct}\\
\Delta^{p+q;k}(\overline{a}\overline{b}) &= \lambda_{\overline{a}}(\Delta^{q;k}(\overline{b}))\ \Delta^{p;k}(\rho_{\Delta^{q;k}(\overline{b})}(\overline{a})).\label{E:Dproduct}
\end{align}
\end{pro}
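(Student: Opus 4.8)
The plan is to deduce both identities from a single structural fact about the $k$-Garside maps and then to read that fact off on the two families of colours appearing in Fig.~\ref{P:Guitar}: the top colours (which compute $\Delta^{\bullet;k}$) and the colours just below the beads (which compute $J^{\bullet;k}$). Write $n=p+q$, and let $\Delta^{m;k}_{[i,\,i+m-1]}$ denote the $k$-Garside map applied to the strands in positions $i,\dots,i+m-1$, with its bead on the rightmost of them. The key claim is the operator identity
\begin{equation}
\Delta^{n;k}=\Delta^{p;k}_{[q+1,\,n]}\ c_{p,q}\ \Delta^{q;k}_{[p+1,\,n]},\tag{$\ast$}
\end{equation}
where $c_{p,q}$ is the bead-free positive braid transposing the block of the first $p$ strands with the block of the last $q$ strands, keeping the internal order of each block. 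Granting $(\ast)$, I would evaluate both sides on $\overline{a}\,\overline{b}$ in three stages (read bottom to top). The bottom factor sends $\overline{b}$ to $\overline{c}:=\Delta^{q;k}(\overline{b})$ and fixes $\overline{a}$, and its $q$ below-bead colours are $J^{q;k}(\overline{b})$. The middle factor carries no bead and is exactly the braiding of the $\overline{a}$-block past the $\overline{c}$-block, whose effect is by definition the pair of block-actions $\lambda_{\overline{a}}$ and $\rho_{\overline{c}}$ occurring in \eqref{E:Dproduct} (the block analogue of \eqref{E:rho_ov_b}): it sends $\overline{a}\,\overline{c}$ to $\lambda_{\overline{a}}(\overline{c})\,\rho_{\overline{c}}(\overline{a})$. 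The top factor sends $\rho_{\overline{c}}(\overline{a})$ to $\Delta^{p;k}(\rho_{\overline{c}}(\overline{a}))$, fixes $\lambda_{\overline{a}}(\overline{c})$, and has below-bead colours $J^{p;k}(\rho_{\overline{c}}(\overline{a}))$. Collecting the top colours gives \eqref{E:Dproduct}; collecting the below-bead colours in the order of the input strands gives \eqref{E:Jproduct}.

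The heart of the argument is therefore $(\ast)$, which I would prove by the diagrammatic calculus already used for Theorem~\ref{T:GenDerived} (Figs.~\ref{P:Guitar}--\ref{P:GuitarBr}). Forgetting the beads, the underlying braid of $\Delta^{n;k}$ is the positive half-twist $\Delta_n$ (a reduced word for the longest permutation, of length $1+2+\cdots+(n-1)$), and on underlying braids $(\ast)$ is the classical factorisation $\Delta_n=\Delta_p\,c_{p,q}\,\Delta_q$ of the half-twist, obtained from the braid relations, i.e.\ from repeated use of the YBE \eqref{E:YBE}. To promote this to an equality of bead-decorated diagrams I would carry the beads along while performing those braid moves, sliding a bead past a crossing by means of the reflection equation \eqref{E:RE}, exactly as beads are slid in the grey zone of Fig.~\ref{P:GuitarBr}.

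An alternative, purely inductive proof of $(\ast)$ proceeds by induction on $p$. The defining word regroups verbatim, with no relations, as $\Delta^{n;k}=k_n\,(r_{n-1}\cdots r_1)\,\Delta^{n-1;k}_{[2,n]}$; one then applies the induction hypothesis to the $k$-Garside map $\Delta^{n-1;k}_{[2,n]}$ on the last $n-1$ strands and commutes the sweep $r_{n-1}\cdots r_1$ together with the final bead $k_n$ into their places, using again \eqref{E:YBE} and \eqref{E:RE}. The base case $p=0$ is the tautology $\Delta^{q;k}=\Delta^{q;k}_{[1,q]}$.

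I expect the bead bookkeeping in $(\ast)$ to be the main obstacle: one must verify that the $n$ beads of $\Delta^{n;k}$ redistribute \emph{precisely} as the $q$ beads of the bottom block followed by the $p$ beads of the top block, with none left on the transposition $c_{p,q}$. The saving observation is that every bead of $\Delta^{n;k}$ sits on the globally rightmost strand, and a direct count shows the successive rightmost strands carry the original strands in the order $n,n-1,\dots,1$; the bottom block accounts for $n,\dots,p+1$ and the top block for $p,\dots,1$, so the counts match. Converting this count into an honest equality of operators is exactly where \eqref{E:RE} is needed to move beads across the crossings created by the braid moves; the final check that the accumulated $\lambda$- and $\rho$-factors assemble into the tuple-extended maps of \eqref{E:rho_ov_b} is then routine.
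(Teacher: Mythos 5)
Your proposal is correct and follows essentially the same route as the paper: the operator identity $(\ast)$ is precisely the content of Fig.~\ref{P:GuitarProduct}, where the half-twist diagram for $\Delta^{p+q;k}$ is drawn so that it visibly splits into $\Delta^{q;k}$ on the $\overline{b}$-block, the bead-free block transposition, and $\Delta^{p;k}$ on the transported $\overline{a}$-block, with the dotted lines reading off $\Delta^{q;k}(\overline{b})$ and $\rho_{\Delta^{q;k}(\overline{b})}(\overline{a})$. You merely make explicit (via the classical factorisation of the half-twist and the bead bookkeeping) what the paper leaves implicit in the picture.
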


Without the superscripts, the formulas take a more readable form:
\begin{align}
J(\overline{a}\overline{b}) &= J(\rho_{\Delta(\overline{b})}(\overline{a}))\ J(\overline{b}),\label{E:JproductShort}\\
\Delta(\overline{a}\overline{b}) &= \lambda_{\overline{a}}(\Delta(\overline{b}))\ \Delta(\rho_{\Delta(\overline{b})}(\overline{a})).\label{E:DproductShort}
\end{align}

Here $\lambda$ and $\rho$ are extended to the powers of~$X$ as shown in Fig~\ref{P:TX}. Note that the definition~\eqref{E:rho_ov_b} is a particular case of this extension.

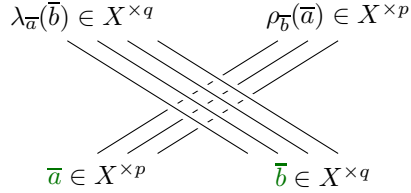
\begin{figure}[h]
\begin{tikzpicture}[xscale=0.4,yscale=0.25]
\draw (1,0)--(7,6);
\draw (2,0)--(8,6);
\draw (3,0)--(9,6);
\draw [line width=4pt,white] (6,0)--(0,6);
\draw [line width=4pt,white] (7,0)--(1,6);
\draw [line width=4pt,white] (8,0)--(2,6);
\draw [line width=4pt,white] (9,0)--(3,6);
\draw (6,0)--(0,6);
\draw (7,0)--(1,6);
\draw (8,0)--(2,6);
\draw (9,0)--(3,6);
\node [below] at (1,0) {${\color{mygreen}\overline{a}} \in X^{\times p}$};
\node [below] at (8.5,0) {${\color{mygreen}\overline{b}} \in X^{\times q}$};
\node [above] at (9,6) {$\rho_{\overline{b}}(\overline{a}) \in X^{\times p}$};
\node [above] at (0.5,6) {$\lambda_{\overline{a}}(\overline{b}) \in X^{\times q}$};
\end{tikzpicture}
\caption{The solution~$r$ extended to~$T(X)$.}\label{P:TX}
\end{figure}

\begin{rem} 
The YBE for $r$ implies that these extended $\lambda$ and $\rho$ induce left and right actions of $M(X,r)$ on itself respectively. Thus formulas \eqref{E:JproductShort}--\eqref{E:DproductShort} are valid for all $\overline{a},\overline{b} \in M(X,r)$, and the induced maps $\overline{J}$ and $\overline{\Delta}$.
\end{rem}

\begin{proof}[Proof of Proposition \ref{P:Almost1cocycle}] The proof is given on Fig.~\ref{P:GuitarProduct}. The dotted lines mean that we are simply reading the colors of the strands we intersect, without modifying them.

\begin{figure}[h]
\centering
\begin{tikzpicture}[xscale=1.4,yscale=1.4]
 \draw [rounded corners=10] (0,0) -- (5,2.5) -- (4,3);
 \draw [line width=4pt,white, rounded corners=10] (1,0) -- (5,2) -- (3,3); 
 \draw [rounded corners=10] (1,0) -- (5,2) -- (3,3); 
 \draw [line width=4pt,white, rounded corners=10] (3,0) -- (5,1) -- (1,3); 
 \draw [rounded corners=10] (3,0) -- (5,1) -- (1,3); 
 \draw [line width=4pt,white, rounded corners=10] (4,0) -- (5,0.5) -- (0,3);   
 \draw [rounded corners=10] (4,0) -- (5,0.5) -- (0,3);  
 \node at (0,-0.4) [mygreen,above] {${a_1}$};  
 \node at (1,-0.4) [mygreen,above] {${a_p}$}; 
 \node at (.5,-0.4) [mygreen,above] {${\cdots}$}; 
 \node at (3,-0.4) [mygreen,above] {${b_{1}}$}; 
 \node at (4,-0.4) [mygreen,above] {${b_q}$}; 
 \node at (3.5,-0.4) [mygreen,above] {${\cdots}$}; 
 \fill (4.85,2.5) circle (.08); 
 \fill (4.85,2) circle (.08);  
 \fill (4.85,1) circle (.08);  
 \fill (4.85,.5) circle (.08);  
 \node at (6.3,2.25) [myblue] {$J^{p;k}(\rho_{\Delta^{q;k}(\overline{b})}(\overline{a}))$}; 
 \node at (5.8,.75) [myblue] {$J^{q;k}(\overline{b})$}; 
 \node at (.5,3.5) [myred] {$\lambda_{\overline{a}}(\Delta^{q;k}(\overline{b}))$};  
 \node at (3.5,3.5) [myred] {$\Delta^{p;k}(\rho_{\Delta^{q;k}(\overline{b})}(\overline{a}))$};  
 \draw [line width=1.5pt,myred] (-.2,3) -- (-.2,3.2) -- (1.2,3.2) -- (1.2,3); 
 \draw [line width=1.5pt,myred] (2.8,3) -- (2.8,3.2) -- (4.2,3.2) -- (4.2,3); 
 \draw [line width=1.5pt,myblue] (5,2.7) -- (5.2,2.7) -- (5.2,1.9) -- (5,1.9);  
 \draw [line width=1.5pt,myblue] (5,1.2) -- (5.2,1.2) -- (5.2,.2) -- (5,.2); 
 \draw [line width=1.5pt,myred,dotted] (3.5,.75) -- (4.7,1.35);  
 \node at (3.4,.6) [myred] {$\scriptstyle \Delta^{q;k}(\overline{b})$};   
 \draw [line width=1.5pt,myred,dotted] (3.7,2.25) -- (4.9,1.65);  
 \node at (3.3,2.4) [myred] {$\scriptstyle \rho_{\Delta^{q;k}(\overline{b})}(\overline{a})$};   
\end{tikzpicture} 
   \caption{The $k$-Garside map $\Delta^{n;k}$ and the $k$-guitar map $J^{n;k}$ evaluated on products.}\label{P:GuitarProduct}
\end{figure}
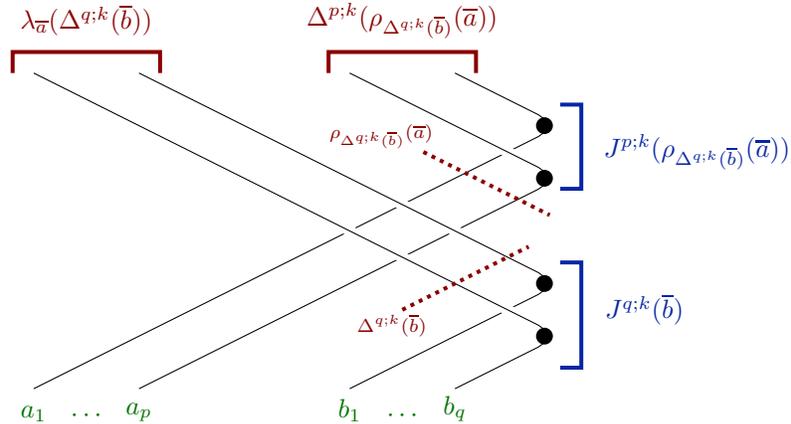
\end{proof}

\begin{rem} In the classical case $k=\Id$, the YBE implies $\lambda_{\Delta^{n;\Id}(\overline{c})}=\lambda_{\overline{c}}$ and $\rho_{\Delta^{n;\Id}(\overline{c})}=\rho_{\overline{c}}$ for all $\overline{c} \in X^n$, so \eqref{E:Jproduct} yields
\begin{align*}
J^{p+q;k}(\overline{a}\overline{b}) &= J^{p;k}(\rho_{\overline{b}}(\overline{a}))\ J^{q;k}(\overline{b}).
\end{align*}
To turn it into a $1$-cocycle condition, one needs to rewrite $J(\rho_{\overline{b}}(\overline{a}))$ as $J(\overline{a})\leftharpoonup \overline{b}$ for some right action of $M(X,r)$ on $M(X,r^{(\Id)})$. We omitted the superscripts for the map $J$ for simplicity. This action thus has to be defined by 
\[\overline{a}\leftharpoonup \overline{b} = J(\rho_{\overline{b}}(J^{-1}(\overline{a}))).\]
One easily checks that it is an action of $M(X,r)$ on $M(X,r^{(\Id)})$ by monoid morphisms. If one naively transposes this procedure to $k$-guitar maps, the operation obtained is no longer a monoid action.
\end{rem}

We have seen that the structure shelves of a solution and its generalised derived solutions coincide. This renders the structure shelves uninteresting invariants of generalised derived solutions. The $k$-versions of structure shelves (which need not to be shelves) are on the contrary rather useful.

\begin{defn}\label{D:kops}
Let $(X,r)$ be an RND solution, and $k$ its reflection. Define the following binary operation on $X$:
\[a \kop b =  \rho_{k(b)} \lambda_{\rho_a^{-1}(b)}(a).\]
If $(X,r)$ is LND instead, define the following binary operation on $X$:
\[b \klop a = \lambda_b k\rho_{\lambda_a^{-1}(b)}(a).\]
\end{defn}

Graphical versions of these operations in Fig.~\ref{P:kStrShelf} are very intuitive. When $k=\Id$, one recovers the right and the left structure shelf operations respectively.

\begin{figure}[h]
\centering
\begin{tikzpicture}[xscale=1,yscale=1]
\draw [rounded corners](0,0)--(0,0.25)--(0.4,0.4);
\draw [rounded corners](0.6,0.6)--(1,0.75)--(1,1.25)--(0,1.75)--(0,2);
\draw [rounded corners](1,0)--(1,0.25)--(0,0.75)--(0,1.25)--(0.4,1.4);
\draw [rounded corners] (0.6,1.6)--(1,1.75)--(1,2);
\fill (1,1) circle (.1);
\node  [mygreen,right] at (.95,.1){\large $a$};
\node  [mygreen,right] at (.95,.8){\large $b$};
\node  [right] at (.95,1.8){$a \kop b$};
\node  at (4,1){ };
\end{tikzpicture}
\begin{tikzpicture}[xscale=1,yscale=1]
\draw [rounded corners](0,0)--(0,0.25)--(0.4,0.4);
\draw [rounded corners](0.6,0.6)--(1,0.75)--(1,1.25)--(0,1.75)--(0,2);
\draw [rounded corners](1,0)--(1,0.25)--(0,0.75)--(0,1.25)--(0.4,1.4);
\draw [rounded corners] (0.6,1.6)--(1,1.75)--(1,2);
\fill (1,1) circle (.1);
\node  [mygreen,left] at (.05,.1){\large $a$};
\node  [mygreen,left] at (.05,1){\large $b$};
\node  [left] at (.05,1.8){$a \klop b$};
\end{tikzpicture}
\caption{The $k$-versions of structure shelf operations.}\label{P:kStrShelf}
\end{figure}
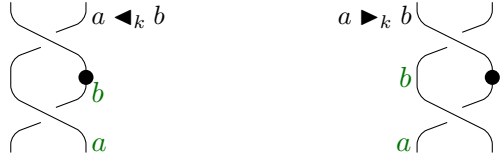

We will now study the operation $\kop$. The second operation $\klop$ will appear later in Remark \ref{R:VeryStrange}. Note that, contrary to the two structure shelf operations, these two operations play asymmetric roles. 

\begin{lem}
Let $(X,r)$ be an RND solution, and $k$ its reflection. For all $a,b,c \in X$, one has
\[(a \kop b) \kop c = (a \kop c') \kop b',\]
where $(b',c')=r^{(k)}(c,b)$.
\end{lem}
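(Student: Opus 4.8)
The plan is to prove this as a ``generalised self-distributivity'' statement for the operation $\kop$, exploiting the fact (from Theorem~\ref{T:StrShelf}) that $\kop$ arises as the right structure shelf operation of $r^{(k)}$, together with the graphical calculus. The cleanest route is pictorial. Recall from Fig.~\ref{P:kStrShelf} that $a \kop b$ is computed by placing $a,b$ below a single crossing with one bead on the right strand, and reading the top-left color. The left-hand side $(a \kop b) \kop c$ is then the configuration obtained by stacking two such beaded crossings, feeding the output of the first into the left input of the second, with $b$ and $c$ on the right strands. I would draw this three-strand diagram and use the Yang--Baxter move (Reidemeister III) to slide the $a$-strand, exactly as in the proof of Theorem~\ref{T:GenDerived}.

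First I would set up the three-strand diagram for $(a \kop b) \kop c$: two crossings with beads, reading off the relevant colors. The key observation is that the two right strands carry colors $b$ and $c$, and when we pull the $a$-strand leftward through both crossings, the YBE guarantees that the color just below the upper bead (which is what $\kop$ reads) is unchanged, while the interaction of the $b$- and $c$-strands with their beads produces precisely the map $r^{(k)}$ applied to $(c,b)$. This is the same mechanism as in Fig.~\ref{P:GuitarBr}, where sliding a bead past its neighbour in the grey zone amounts to applying $r^{(k)}$ and uses the reflection equation. So after the slide, the two right strands carry the colors $(b',c')=r^{(k)}(c,b)$ in the reversed order expected for the right-hand side, and the resulting diagram is exactly the one computing $(a \kop c') \kop b'$.

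Algebraically, I would verify the same identity by direct substitution into the formula $a \kop b = \rho_{k(b)}\lambda_{\rho_a^{-1}(b)}(a)$, unwinding both sides and using the defining formula for $r^{(k)}$ from Definition~\ref{D:k-sol}, namely $r^{(k)}(c,b)=(b',c')$ with $c'=\rho_b\rho_{k(b)}^{-1}(c)$ and $b'=\rho_{k(c')}\lambda_{\rho_{k(b)}^{-1}(c)}(b)$. The main obstacle is bookkeeping: one must match the intermediate color $a \kop b$ (which feeds into the second $\kop$) on the left against the color produced on the right after the $r^{(k)}$-slide, and this requires invoking the reflection equation at the right spot to relate the two bead placements. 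I expect the picture to make the correct pairing transparent—specifically that the ``read'' color under the top bead is insensitive to the reordering of the right strands precisely because YBE has only a local effect on colorings—whereas a purely algebraic check would demand careful tracking of the $\rho$ and $\lambda$ composites.

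Concretely, the proof I would write is short: \emph{By Theorem~\ref{T:StrShelf}, $\kop$ is the right structure shelf operation of $r^{(k)}$.} The claimed identity is then the graphical statement obtained by applying the YBE to the three-strand beaded diagram, reading the distinguished colors just below the beads before and after sliding the $a$-strand; the reordering $(c,b) \mapsto (b',c')=r^{(k)}(c,b)$ of the two right strands is exactly the effect of the bead-slide, justified by the reflection equation as in the proof of Theorem~\ref{T:GenDerived}. I would accompany this with a single figure mirroring Fig.~\ref{P:GDsolutionStrShelf}, so that the reader sees the bottom colors $a,b,c$ propagate to the two top-left reading positions in both orders.
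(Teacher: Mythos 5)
Your graphical argument is essentially the paper's own proof (Fig.~\ref{P:Action}): isotope the three-strand beaded diagram by the YBE until the two beads are adjacent, slide one bead around the other --- which by the RE and the definition of $r^{(k)}$ replaces the colors $(c,b)$ just below the beads by $(b',c')=r^{(k)}(c,b)$ while leaving the top color untouched --- and isotope back by the YBE. Two inaccuracies should be removed before you write this up. First, the opening sentence of your ``concrete proof'' is false: Theorem~\ref{T:StrShelf} does \emph{not} say that $\kop$ is the right structure shelf operation of $r^{(k)}$; it says that the structure shelf $\op_k$ of $r^{(k)}$ coincides with the structure shelf $\op$ of $r$, and the paper explicitly stresses that the $k$-version $\kop$ of Definition~\ref{D:kops} need not be a shelf at all --- indeed $a\kop b=\rho_{k(b)}\lambda_{\rho_a^{-1}(b)}(a)$ differs from $a\op b=\rho_{b}\lambda_{\rho_a^{-1}(b)}(a)$ by the $k$ in the subscript, so the lemma is not a formal consequence of any structure-shelf identity. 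Fortunately nothing in your diagram manipulation uses this claim, so simply delete it. Second, your reading of Fig.~\ref{P:kStrShelf} is off: $a\kop b$ is computed from a \emph{two}-crossing diagram with the bead between the crossings, with inputs $a$ (bottom right) and $b$ (just below the bead) and the output read at the top \emph{right}; with that correction your three-strand picture becomes exactly the one in Fig.~\ref{P:Action} and the rest of your argument goes through as stated.
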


\begin{proof}
A graphical proof is given on Fig.~\ref{P:Action}.
\begin{figure}
\begin{tikzpicture}[xscale=0.75,yscale=1,>=latex]
 \node at (4,3.3) [right] {$\scriptstyle{(a \kop b) \kop c}$}; 
 \draw (4,2.5) -- (4,3.5); 
 \draw [line width=4pt,white, rounded corners=10] (3,0) -- (5,1) -- (3,2);
 \draw [rounded corners=10] (3,0) -- (5,1) -- (3,2);
 \draw [line width=4pt,white, rounded corners=10] (3,1.5) -- (5,2.5) -- (3,3.5); 
 \draw [rounded corners=10] (3,1.5) -- (5,2.5) -- (3,3.5); 
 \draw [line width=4pt,white]  (4.625,1.1875) -- (4.375,1.3125);    
 \draw (4.625,1.1875) -- (4.375,1.3125); 
 \draw [line width=4pt,white] (4,0) -- (4,2.5); 
 \draw (4,0) -- (4,2.5); 
 \draw [line width=4pt,white] (4.5,1.25) -- (3,2);
 \draw (4.5,1.25) -- (3,2);  
 \node at (4.6,2.3) [right] {${\color{mygreen} c}$};  
 \node at (4.6,.8) [right] {${\color{mygreen} b}$};   
 \node at (4,0.1) [right] {${\color{mygreen} a}$}; 
 \fill (4.75,2.5) circle (.08); 
 \fill (4.75,1) circle (.08);    
 \draw [latex->, rounded corners=10,myviolet] (5.5,1.5) -- (6,1.5)node[above] {$\scriptstyle{\text{YBE}}$} -- (6.5,1.5);
\end{tikzpicture}
\begin{tikzpicture}[xscale=0.75,yscale=1,>=latex]
 \draw (4,1.5) -- (4,3); 
 \draw [line width=4pt,white, rounded corners=10] (3,0) -- (5,1) -- (4,1.5) -- (5,2) -- (3,3);
 \draw [rounded corners=10] (3,0) -- (5,1) -- (4,1.5) -- (5,2) -- (3,3);
 \draw [line width=4pt,white, rounded corners=10] (2,0) -- (5,1.5) -- (2,3); 
 \draw [rounded corners=10] (2,0) -- (5,1.5) -- (2,3); 
 \draw [line width=4pt,white]  (4.625,1.1875) -- (4.375,1.3125);    
 \draw (4.625,1.1875) -- (4.375,1.3125); 
 \draw [line width=4pt,white] (4,0) -- (4,1.5); 
 \draw (4,0) -- (4,1.5); 
 \node at (4.6,1.3) [right] {${\color{mygreen} c}$};  
 \node at (4.6,.8) [right] {${\color{mygreen} b}$};   
 \node at (4,0.1) [right] {${\color{mygreen} a}$}; 
 \fill (4.75,1.5) circle (.08); 
 \fill (4.75,1) circle (.08);    
\end{tikzpicture}
\begin{tikzpicture}[xscale=0.75,yscale=1,>=latex]
 \draw [latex->, rounded corners=10,myviolet] (1.5,1.5) -- (2,1.5)node[above] {$\scriptstyle{\text{RE}}$} -- (2.5,1.5);
 \draw (4,1.5) -- (4,3); 
 \draw [line width=4pt,white, rounded corners=10] (3,0) -- (5,1) -- (4,1.5) -- (5,2) -- (3,3);
 \draw [rounded corners=10] (3,0) -- (5,1) -- (4,1.5) -- (5,2) -- (3,3);
 \draw [line width=4pt,white, rounded corners=10] (2,0) -- (5,1.5) -- (2,3); 
 \draw [rounded corners=10] (2,0) -- (5,1.5) -- (2,3); 
 \draw [line width=4pt,white]  (4.625,1.1875) -- (4.375,1.3125);    
 \draw (4.625,1.1875) -- (4.375,1.3125); 
 \draw [line width=4pt,white] (4,0) -- (4,1.5); 
 \draw (4,0) -- (4,1.5); 
 \node at (4.6,1.3) [right] {${\color{mygreen} c'}$};  
 \node at (4.6,1.8) [right] {${\color{mygreen} b'}$};   
 \node at (4,0.1) [right] {${\color{mygreen} a}$}; 
 \fill (4.75,1.5) circle (.08); 
 \fill (4.75,2) circle (.08);    
\end{tikzpicture}
\begin{tikzpicture}[xscale=0.75,yscale=1,>=latex]
 \draw [latex->, rounded corners=10,myviolet] (1.5,1.5) -- (2,1.5)node[above] {$\scriptstyle{\text{YBE}}$} -- (2.5,1.5);
 \draw (4,2.5) -- (4,3.5); 
 \draw [line width=4pt,white, rounded corners=10] (3,0) -- (5,1) -- (3,2);
 \draw [rounded corners=10] (3,0) -- (5,1) -- (3,2);
 \draw [line width=4pt,white, rounded corners=10] (3,1.5) -- (5,2.5) -- (3,3.5); 
 \draw [rounded corners=10] (3,1.5) -- (5,2.5) -- (3,3.5); 
 \draw [line width=4pt,white]  (4.625,1.1875) -- (4.375,1.3125);    
 \draw (4.625,1.1875) -- (4.375,1.3125); 
 \draw [line width=4pt,white] (4,0) -- (4,2.5); 
 \draw (4,0) -- (4,2.5); 
 \draw [line width=4pt,white] (4.5,1.25) -- (3,2);
 \draw (4.5,1.25) -- (3,2);  
 \node at (4.6,2.3) [right] {${\color{mygreen} b'}$};  
 \node at (4.6,.8) [right] {${\color{mygreen} c'}$};   
 \node at (4,0.1) [right] {${\color{mygreen} a}$}; 
 \fill (4.75,2.5) circle (.08); 
 \fill (4.75,1) circle (.08);    
 \node at (4,3.3) [right] {$\scriptstyle{(a \kop c') \kop b'}$}; 
\end{tikzpicture}
   \caption{The relation $(a \kop b) \kop c = (a \kop c') \kop b'$, where $(b',c')=r^{(k)}(c,b)$, is established by comparing the upper colors.}\label{P:Action}
\end{figure}
\end{proof}

This lemma directly implies
\begin{pro}\label{P:Actions}
Let $(X,r)$ be an RND solution, and $k$ its reflection. Then the right translations $ \kop b$ extend to a left action of the structure monoid $M(X,r^{(k)})$ on $X$.
\end{pro}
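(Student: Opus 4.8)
The plan is to present the desired action as a monoid homomorphism from $M(X,r^{(k)})$ to the full transformation monoid $\End(X)$ of self-maps of $X$ under composition, assembled from the right translations on the generators. Recall that giving a left action of a monoid $M$ on $X$ is the same as giving such a homomorphism $\phi$, via $\phi(m)(a)=m\cdot a$; the action axiom $(mn)\cdot a=m\cdot(n\cdot a)$ is exactly the multiplicativity $\phi(mn)=\phi(m)\circ\phi(n)$.

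First I would define, on each generator $b\in X$ of $M(X,r^{(k)})$, the self-map $\phi(b)\colon a\mapsto a\kop b$, i.e.\ let $b$ act by its right translation $\kop b$. By the universal property of the free monoid on $X$ this extends uniquely to a homomorphism from the free monoid to $\End(X)$, sending a word $b_1\cdots b_n$ to the composite $(\kop b_1)\circ\cdots\circ(\kop b_n)$, which on an element $a$ evaluates to $(\cdots((a\kop b_n)\kop b_{n-1})\cdots)\kop b_1$.

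The key step is to check that this homomorphism descends to the quotient $M(X,r^{(k)})$, i.e.\ respects its defining relations. Writing $r^{(k)}(c,b)=(b',c')$ as in the preceding Lemma, the relation of $M(X,r^{(k)})$ attached to the pair $(c,b)$ is precisely $cb=b'c'$, since $b'$ and $c'$ are the two components of $r^{(k)}(c,b)$. Under $\phi$ this relation becomes the equality of self-maps $(\kop c)\circ(\kop b)=(\kop b')\circ(\kop c')$, which, evaluated at an arbitrary $a$, reads $(a\kop b)\kop c=(a\kop c')\kop b'$. This is exactly the statement of the Lemma. Since the relations of a presented monoid may be applied inside arbitrary words and composition is associative, verifying $\phi(cb)=\phi(b'c')$ for each relator suffices for $\phi$ to factor through $M(X,r^{(k)})$; the empty word then acts by $\Id_X$, and we obtain the asserted left action.

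I do not anticipate a serious obstacle, as the Lemma already encapsulates the only non-formal input, namely well-definedness. The single point requiring care is the reversal of order inherent in building a \emph{left} action out of \emph{right} translations: one must pair the relation $cb=b'c'$ (rather than $bc=\cdots$) with the Lemma, and keep in mind that $(b',c')=r^{(k)}(c,b)$ and not $r^{(k)}(b,c)$. Once these orders are aligned, the verification is immediate.
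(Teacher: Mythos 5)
Your proposal is correct and matches the paper's intent exactly: the paper simply states that the preceding Lemma ``directly implies'' the Proposition, and your write-up supplies precisely the routine verification behind that claim, namely that the map $b\mapsto(\ \kop b)$ from generators to $\End(X)$ respects the defining relation $cb=b'c'$ of $M(X,r^{(k)})$ because of the Lemma's identity $(a\kop b)\kop c=(a\kop c')\kop b'$. The order conventions you flag (pairing the \emph{left} action with \emph{right} translations, and reading $(b',c')=r^{(k)}(c,b)$) are handled correctly.
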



\section{Reflections for involutive solutions}

In this section $(X,r)$ is an involutive set-theoretic YBE solution. 

First, we will show that in this case in order to check the reflection equation~\eqref{E:RE} on $X^2$, it is often sufficient to look at one of the two coordinates of $X^2$ only.

To make this statement concrete, put
\[t(a,b) = \lambda_{\lambda_a(b)}k\rho_b (a), \qquad u(a,b) = \rho_{k \rho_b(a)}\lambda_a(b),\]
where $a,b \in X$. With these notations, \eqref{E:RE} can be rewritten as
\begin{align}
(t(a, k(b)),\ u(a, k(b))) &\ =\  (t(a, b),\ k(u(a, b))).\label{E:RE2}
\end{align}

\begin{thm}\label{T:InvolHalf}
\begin{enumerate}
\item\label{I:REinvol1} \cite{SVW18} Let $(X,r)$ be an LND involutive solution. A map $k \colon X \to X$ is a reflection for $(X,r)$ if and only if the left part 
\begin{align}
t(a, k(b)) &= t(a, b)\label{E:refl}
\end{align}
of~\eqref{E:RE2} holds for all $a,b \in X$.
\item\label{I:REinvol2} Let $(X,r)$ be an RND involutive solution. A map $k \colon X \to X$ is a reflection for $(X,r)$ if and only if the right part
\begin{align}
u(a, k(b)) &= k(u(a, b))\label{E:refl2}
\end{align}
of~\eqref{E:RE2} holds for all $a,b \in X$.
\end{enumerate}
\end{thm}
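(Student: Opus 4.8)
The plan is to prove both equivalences by handling the two coordinates of \eqref{E:RE2} separately. The ``only if'' direction is free: if $k$ is a reflection then \eqref{E:RE2} holds, so in particular each of its coordinates \eqref{E:refl}, \eqref{E:refl2} holds. Thus in each item I only need to show that the single displayed coordinate equation forces the full equation \eqref{E:RE2}, i.e. that one coordinate of \eqref{E:RE2} implies the other.

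The engine is involutivity. Set $F=rk_2r$; a direct computation gives $F(a,b)=(t(a,b),u(a,b))$ (this is exactly the content of \eqref{E:RE2}). Applying the involution $r$ and using $r^2=\Id$ yields $r(t(a,b),u(a,b))=k_2r(a,b)=(\lambda_a(b),\,k\rho_b(a))$, hence the two \emph{recovery identities}
\[\lambda_{t(a,b)}(u(a,b))=\lambda_a(b),\qquad \rho_{u(a,b)}(t(a,b))=k\rho_b(a),\]
valid for all $a,b\in X$. These are what tie the two coordinates together.

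For item \eqref{I:REinvol1} (the LND case) I would assume \eqref{E:refl}. The first recovery identity at $(a,k(b))$ combined with \eqref{E:refl} gives $\lambda_{t(a,b)}(u(a,k(b)))=\lambda_a(k(b))$; since every $\lambda$ is bijective, \eqref{E:refl2} is then \emph{equivalent} to $\lambda_{t(a,b)}(k(u(a,b)))=\lambda_a(k(b))$. To establish this I pass to the preimage pair $(P,Q):=r(a,b)$, so that involutivity gives $r(P,Q)=(a,b)$, i.e. $\lambda_P(Q)=a$ and $\rho_Q(P)=b$. Since $t(a,b)=\lambda_P(k(Q))$ and $u(a,b)=\rho_{k(Q)}(P)$, unwinding the definitions gives $\lambda_{t(a,b)}(k(u(a,b)))=t(P,k(Q))$, while involutivity gives $\lambda_a(k(b))=\lambda_{\lambda_P(Q)}(k\rho_Q(P))=t(P,Q)$. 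Now \eqref{E:refl} \emph{applied to the pair $(P,Q)$} says precisely $t(P,k(Q))=t(P,Q)$, which closes the case. Item \eqref{I:REinvol2} (the RND case) is entirely parallel through the second recovery identity: assuming \eqref{E:refl2}, that identity at $(a,k(b))$ reads $\rho_{k(u(a,b))}(t(a,k(b)))=k\rho_{k(b)}(a)$, so by bijectivity of the $\rho$'s it suffices to prove $\rho_{k(u(a,b))}(t(a,b))=k\rho_{k(b)}(a)$; with the same substitution $(P,Q)=r(a,b)$ the left side becomes $u(P,k(Q))$ and the right side becomes $k(u(P,Q))$, and \eqref{E:refl2} at $(P,Q)$ finishes it.

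The one genuine idea---and the only spot where I anticipate any difficulty---is the recognition that the ``missing'' term, $\lambda_a(k(b))$ in the LND case and $k\rho_{k(b)}(a)$ in the RND case, is itself a value of $t$ (respectively $u$) at the preimage $r(a,b)$. Involutivity is doing double duty here: it produces the recovery identities and it guarantees $r(P,Q)=(a,b)$, and together these let the hypothesis---known at \emph{every} pair, hence at $(P,Q)$---be fed back in. I would flag as a sanity check that the whole argument uses only involutivity and the relevant one-sided non-degeneracy, and never the Yang--Baxter relations themselves; if a computation seemed to require (Y1)--(Y3) I would treat that as a sign of an error.
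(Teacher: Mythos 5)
Your proof is correct and is essentially the paper's argument in different clothing: both establish the missing coordinate of \eqref{E:RE2} by applying the one-coordinate hypothesis twice, at $(a,b)$ and at the pair $r(a,b)$, with involutivity supplying your ``recovery identities'' (the paper's $r(e,f)=k_2(c,d)$) and the injectivity of a single $\rho_f$ (resp.\ $\lambda_{t(a,b)}$) closing the argument. The only difference is presentational --- you work with the explicit functions $t,u$ and the substitution $(P,Q)=r(a,b)$ where the paper tracks components of composites of $r$ and $k_2$ --- plus the minor bonus that you write out Point~\ref{I:REinvol1}, which the paper leaves to symmetry and \cite{SVW18}.
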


Relations \eqref{E:refl} and \eqref{E:refl2} can be explicitly written as follows:
\begin{align}
\lambda_{\lambda_a(b)}k\rho_b (a) &=\lambda_{\lambda_a(k(b))}k\rho_{k(b)} (a),\label{E:reflExplicit}\\
\rho_{k \rho_{k(b)}(a)}\lambda_ak(b) &= k\rho_{k \rho_b(a)}\lambda_a(b).\label{E:refl2Explicit}
\end{align}

\begin{proof}
We will prove only Point~\ref{I:REinvol2}. Point~\ref{I:REinvol1} can be shown using similar arguments; an alternative approach can be found in \cite{SVW18}.

Our aim is to deduce \eqref{E:refl} from \eqref{E:refl2}. Assume \eqref{E:refl2}. Take any $a,b \in X$. Put
\[(c,d)=rk_2(a,b), \qquad (e,f)=rk_2(c,d).\]
Since $r$ is involutive, this yields
\begin{align}
r(e,f)&=k_2(c,d)=(c,k(d)).\label{E:Invol1}
\end{align}
By \eqref{E:refl2}, we have
\[k_2rk_2r(a,b)=(e',f)\]
for some $e' \in X$. We need to prove that $e'=e$. Put 
\[(g,h)=r(e',f)=rk_2rk_2r(a,b).\]
By \eqref{E:refl2} again, $h$ is the same as the second component of
\[k_2rk_2rr(a,b)=k_2rk_2(a,b)=k_2(c,d)=(c,k(d)),\]
that is, $h=k(d)$. This yields
\[(g,k(d))=r(e',f).\]
Comparing with~\eqref{E:Invol1} and using the right non-degeneracy, one concludes that $e'=e$, as desired.
\end{proof}

This is used in the following construction of reflections:

\begin{thm}\label{T:InvolExamples}
\begin{enumerate}
\item\label{I:Ex1} \cite{SVW18} Let $(X,r)$ be an LND involutive solution. A map $k \colon X \to X$ commuting with all the $\lambda_a$'s, in the sense of
\begin{equation}\label{E:lambda_equivariant}
k\lambda_a = \lambda_a k \qquad \text{ for all } a \in X,
\end{equation}
is a reflection for $(X,r)$.
\item\label{I:Ex2} Let $(X,r)$ be an RND involutive solution. A map $k \colon X \to X$ satisfying
\begin{equation}\label{E:rho_commute}
\rho_{k(a)}=\rho_a \qquad \text{ for all } a \in X
\end{equation}
is a reflection for $(X,r)$.
\end{enumerate}
\end{thm}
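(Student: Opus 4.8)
The plan is to prove each point by verifying the relevant half of the reflection equation, using Theorem~\ref{T:InvolHalf} to reduce the work. For Point~\ref{I:Ex2}, by Theorem~\ref{T:InvolHalf}\eqref{I:REinvol2} it suffices to check~\eqref{E:refl2Explicit}, namely
\begin{equation*}
\rho_{k \rho_{k(b)}(a)}\lambda_ak(b) = k\rho_{k \rho_b(a)}\lambda_a(b)
\end{equation*}
for all $a,b \in X$. The hypothesis~\eqref{E:rho_commute} says $\rho_{k(x)}=\rho_x$ for all $x$. First I would apply this to $x=\rho_{k(b)}(a)$ on the left-hand side, turning $\rho_{k \rho_{k(b)}(a)}$ into $\rho_{\rho_{k(b)}(a)}$; and to $x=\rho_b(a)$ inside the right-hand side, turning $\rho_{k\rho_b(a)}$ into $\rho_{\rho_b(a)}$. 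The goal then reduces to showing $\rho_{\rho_{k(b)}(a)}\lambda_a k(b) = k\,\rho_{\rho_b(a)}\lambda_a(b)$.

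The key observation is that~\eqref{E:rho_commute} means $k$ acts trivially on the second component of $r$: since $r(a,b)=(\lambda_a(b),\rho_b(a))$ and $\rho_{k(b)}=\rho_b$, pre-composing $r$ with $k$ in the second slot does not change the $\rho$-component. To organize the computation I would look at both sides as arising from the evaluation of the solution $r$ on suitable pairs. Concretely, writing $r(a,b)=(\lambda_a(b),\rho_b(a))$, the right-hand side $\rho_{\rho_b(a)}\lambda_a(b)$ is precisely the second component obtained by applying $r$ to $(\lambda_a(b),\rho_b(a))=r(a,b)$, i.e.\ it is the second coordinate of $r^2(a,b)$. The left-hand side, by contrast, is the second coordinate of $r$ applied to $(\lambda_a k(b),\rho_{k(b)}(a))=r(a,k(b))$. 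So the identity to prove becomes: the second coordinate of $r^2(a,b)$, after applying $k$, equals the second coordinate of $r(r(a,k(b)))$; equivalently, that replacing $b$ by $k(b)$ in the input only affects the first-coordinate information carried forward, leaving the second coordinate governed entirely by $\rho_b=\rho_{k(b)}$.

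The main obstacle will be controlling how $k$ interacts with the \emph{first} components $\lambda$, since~\eqref{E:rho_commute} only constrains the $\rho$'s directly. The way around this is to use involutivity: applying $r$ twice returns the identity, so any first-component data produced at one stage is cancelled at the next, and only the $\rho$-dependence survives in the coordinate we track. I would formalize this by unwinding the definition of $t$ and $u$ and repeatedly substituting $\rho_{k(x)}=\rho_x$, then invoking the involutive cancellation $r^2=\id$ to match the two sides. For Point~\ref{I:Ex1}, the dual argument applies: by Theorem~\ref{T:InvolHalf}\eqref{I:REinvol1} one checks only~\eqref{E:reflExplicit}, and the commutation relation~\eqref{E:lambda_equivariant} $k\lambda_a=\lambda_a k$ lets $k$ pass through every $\lambda$ in the expression for $t(a,k(b))$; pushing $k$ all the way to the right and again using involutivity to absorb the leftover $\rho$-factors should close the identity. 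A cleaner and more uniform alternative, which I would try first, is to give a pictorial proof: represent $k$ as a bead and observe that~\eqref{E:rho_commute} (resp.~\eqref{E:lambda_equivariant}) lets the bead slide through the right (resp.\ left) outgoing strand of a crossing, so that both sides of the reduced reflection equation become the same diagram.
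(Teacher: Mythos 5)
Your proposal is correct and follows the same route as the paper: reduce to the one‑coordinate criterion of Theorem~\ref{T:InvolHalf}, substitute the hypothesis ($\rho_{k(x)}=\rho_x$, resp.\ $k\lambda_a=\lambda_a k$) to move $k$ out of the way, and then use involutivity $r^2=\Id$ (i.e.\ $\lambda_{\lambda_a(b)}\rho_b(a)=a$ and $\rho_{\rho_b(a)}\lambda_a(b)=b$) to see that both sides of \eqref{E:reflExplicit} collapse to $k(a)$ and both sides of \eqref{E:refl2Explicit} collapse to $k(b)$. The only quibble is the phrase ``pushing $k$ all the way to the right'' in Point~\ref{I:Ex1}: $k$ only commutes with the $\lambda$'s, so it must be extracted to the \emph{outside} of the composition, leaving $\lambda_{\lambda_a(b)}\rho_b(a)=a$ to be absorbed by involutivity — which is clearly what you intend.
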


\begin{proof}
\begin{enumerate}
\item According to Theorem~\ref{T:InvolHalf}, it is sufficient to check the relation \eqref{E:reflExplicit} for all $a,b \in X$. We have
\[\lambda_{\lambda_a(b)}k\rho_b (a)=k\lambda_{\lambda_a(b)}\rho_b (a)=k(a).\]
The first equality follows from \eqref{E:lambda_equivariant}, and the second one from the involutivity of~$r$. Similarly,
\[\lambda_{\lambda_a(k(b))}k\rho_{k(b)} (a)=k\lambda_{\lambda_a(k(b))}\rho_{k(b)} (a)=k(a).\]
\item According to Theorem~\ref{T:InvolHalf}, it is sufficient to check the relation \eqref{E:refl2Explicit} for all $a,b \in X$. We have
\[\rho_{k\rho_{k(b)}(a)}\lambda_ak(b)=\rho_{\rho_{k(b)}(a)}\lambda_ak(b)=k(b).\]
The first equality follows from \eqref{E:rho_commute}, and the second one from the involutivity of~$r$. Similarly,
\[k\rho_{k \rho_b(a)}\lambda_a(b)=k\rho_{\rho_b(a)}\lambda_a(b)=k(b).\qedhere\] 
\end{enumerate}
\end{proof}

\begin{exa}\label{EX:Perm}
Consider a permutation solution $r(a,b)=(f(b),f^{-1}(a))$, where $f$ is a permutation on~$X$. It is an involutive non-degenerate solution. Point~\ref{I:Ex1} of the theorem shows that any $k$ commuting with $f$ is a reflection, while Point~\ref{I:Ex2} asserts that all maps $k \colon X \to X$ are reflections. In this case, all the generalised derived solutions are simply the flips: $r^{(k)}(a,b)=(b,a)$.
\end{exa}

\begin{exa}\label{EX:3bis}
Let us resume Example~\ref{EX:3}. It is an involutive non-degenerate solution. Point~\ref{I:Ex1} of the theorem yields three reflections: $123$, $213$ and $333$. Point~\ref{I:Ex2} yields four reflections: all the maps respecting the decomposition $X=\{1,2\}\sqcup \{3\}$, that is, $123$, $213$, $113$, $223$. In total one gets five reflections, which is the complete list for this solution. For all these reflections except for $333$, we get $\rho_b=\rho_{k(b)}$ for all $b$, hence the $k$-derived solution is of the form $r^{(k)}(a,b)=(b',a)$. But it is involutive since the original solution is so, thus $r^{(k)}$ is a flip. The solution $r^{(333)}$ was described in Example~\ref{EX:3}. 
\end{exa}

\begin{pro}\label{P:InvolEqClasses}
Let $k$ be a reflection for an involutive solution $(X,r)$, and take two maps $\varphi,\psi \colon X \to X$ satisfying \eqref{E:lambda_equivariant}--\eqref{E:rho_commute}. Then the map $\varphi k \psi$ is a reflection.
\end{pro}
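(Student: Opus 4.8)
The plan is to sidestep a head-on expansion of the reflection equation for $\varphi k\psi$ and instead extract one structural identity satisfied by any map obeying both \eqref{E:lambda_equivariant} and \eqref{E:rho_commute}. Writing $r(a,b)=(\lambda_a(b),\rho_b(a))$ and letting $\varphi_i$ denote $\varphi$ applied in the $i$th slot of $X^2$, a one-line check shows that \eqref{E:lambda_equivariant} handles the first coordinate and \eqref{E:rho_commute} the second, giving the intertwining relation
\[ r\varphi_2 = \varphi_1 r, \]
valid for an \emph{arbitrary} solution. Since $r$ is involutive, conjugating this identity by $r$ (multiply on both sides by $r$ and use $r^2=\Id$) produces the companion relation
\[ r\varphi_1 = \varphi_2 r. \]
Both relations hold verbatim for $\psi$. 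I emphasise that non-degeneracy is never invoked: the formula for $r$ is all that is used, so the argument applies to any involutive solution.

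Next I would prove that these two relations force pre- and post-composition of any reflection by such a map to stay a reflection, and then deduce the statement from $\varphi k\psi=\varphi(k\psi)$. For the post-composition case $k\psi$, start from $r(k\psi)_2 r(k\psi)_2 = rk_2\psi_2 rk_2\psi_2$ and push every $\psi$ to the far right using $\psi_2 r=r\psi_1$ together with the fact that $\psi_1$ and $k_2$ act on disjoint coordinates and hence commute; along the way one records that $\psi_1\psi_2$ commutes with $r$ (from $\psi_1\psi_2 r=\psi_1 r\psi_1=r\psi_2\psi_1=r\psi_1\psi_2$). Both sides then collapse to $(rk_2rk_2)\psi_1\psi_2$ and $(k_2rk_2r)\psi_1\psi_2$, which agree by the reflection equation \eqref{E:RE} for $k$. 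No invertibility of $\psi$ is needed, since we only multiply the $k$-relation on the right. The pre-composition case $\varphi k$ is entirely parallel: moving each $\varphi_2$ leftward with $r\varphi_2=\varphi_1 r$ and $r\varphi_1=\varphi_2 r$, and using $\varphi_1 k_2=k_2\varphi_1$, reduces the two sides to $\varphi_1\varphi_2(rk_2rk_2)$ and $\varphi_1\varphi_2(k_2rk_2r)$, equal again by \eqref{E:RE} for $k$.

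Finally I would combine the two cases: the post-composition case shows $k\psi$ is a reflection, and then the pre-composition case, applied to the reflection $k\psi$ and the map $\varphi$, shows $\varphi(k\psi)=\varphi k\psi$ is a reflection. The main obstacle here is conceptual rather than computational: one must notice that involutivity is precisely what upgrades the single, manifestly available identity $r\varphi_2=\varphi_1 r$ into the symmetric pair, and that it is this pair---not the defining properties \eqref{E:lambda_equivariant} and \eqref{E:rho_commute} individually---that the reflection-equation bookkeeping consumes. Once both relations are in place, all that remains is careful tracking of which coordinate each operator acts on.
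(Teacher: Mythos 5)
Your proposal is correct and follows essentially the same route as the paper: both arguments hinge on assembling \eqref{E:lambda_equivariant}--\eqref{E:rho_commute} into the intertwining relation $r\varphi_2=\varphi_1 r$, upgrading it via involutivity to the companion identity $r\varphi_1=\varphi_2 r$, and then reducing the reflection equation for $\varphi k\psi$ to that for $k$ by pushing the $\varphi$'s left and the $\psi$'s right. The only difference is organizational: you factor the verification into a post-composition step ($k\psi$) followed by a pre-composition step, whereas the paper carries out both pushes in a single display.
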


\begin{proof}
Relations \eqref{E:lambda_equivariant}--\eqref{E:rho_commute} can be assembled into 
$r\varphi _2=\varphi _1r$. But for involutive $r$, this implies $r\varphi _1=\varphi _2r$, since
\[\varphi _2r=rr\varphi _2r=r\varphi _1rr=r\varphi _1.\]
The same holds for $\psi$. This yields
\begin{align*}
r \varphi_2 k_2 \psi_2 r \varphi_2 k_2 \psi_2 &=(\varphi \times \varphi) rk_2rk_2 (\psi \times \psi) = (\varphi \times \varphi) k_2rk_2r (\psi \times \psi)\\
& =\varphi_2 k_2 \psi_2 r \varphi_2 k_2 \psi_2 r.\qedhere
\end{align*}
\end{proof}

This result suggests how to divide reflections for involutive solutions into equivalence classes.

\begin{exa}
For a trivial solution $r(a,b)=(b,a)$, all maps $X \to X$ are reflections, and they form a single equivalence class.
\end{exa}

\begin{exa}\label{EX:Permbis}
For a general permutation solution $r(a,b)=(f(b),f^{-1}(a))$ from Example \ref{EX:Perm}, all maps $X \to X$ are still reflections, but they may fall into several equivalence classes. For example, for $X=\{1,2\}$ and $f=(12)$, there are two maps satisfying \eqref{E:lambda_equivariant}--\eqref{E:rho_commute}: $\varphi=\Id$ and $\varphi=(12)$. Then the reflections fall into two classes: $\{12,21\}$ and $\{11,22\}$. On the other hand, if $f$ has a fixed point $p$, then any reflection can be postcomposed with the projection $\varphi(a)=p$, hence there is only one equivalence class.
\end{exa}

\begin{exa}
Let us resume Example~\ref{EX:3bis}. There are two maps satisfying \eqref{E:lambda_equivariant}--\eqref{E:rho_commute}:  $\varphi=\Id$ and $\varphi=(12)$. Pre- and post-composition with the latter divides the five solutions into three classes: $\{123,213\}$, $\{113,223\}$, and $\{333\}$.
\end{exa}

In the last two examples, the equivalence class turns out to be an invariant strictly refining the generalised derived solution. We will now show that in general two equivalent reflections yield isomorphic generalised derived solutions whenever the $\psi$'s used in the equivalence relation are bijections:

\begin{pro}\label{P:EquivReflVsDerivedSol}
Let $k$ be a reflection for an RND involutive solution $(X,r)$, and take two maps $\varphi,\psi \colon X \to X$ satisfying \eqref{E:lambda_equivariant}--\eqref{E:rho_commute}. One has
\[r^{(k)} (\psi \times \psi) = (\psi \times \psi) r^{(\varphi k \psi)}.\]
\end{pro}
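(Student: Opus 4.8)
The plan is to reduce the identity to a direct computation with the explicit formula of Definition~\ref{D:k-sol}, after first harvesting the correct commutation relations for $\psi$ and observing that $\varphi$ plays no role whatsoever.

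First I would record the consequences of the hypotheses. Exactly as in the proof of Proposition~\ref{P:InvolEqClasses}, the relations \eqref{E:lambda_equivariant}--\eqref{E:rho_commute} for $\psi$ assemble into $r\psi_2=\psi_1 r$, and involutivity of $r$ upgrades this to $r\psi_1=\psi_2 r$. Reading off the two coordinates of the latter equality yields the \emph{dual} relations
\[\lambda_{\psi(a)}=\lambda_a \qquad\text{and}\qquad \psi\rho_b=\rho_b\psi \qquad\text{for all } a,b\in X.\]
Together with \eqref{E:lambda_equivariant}--\eqref{E:rho_commute} themselves, this shows that $\psi$ commutes with every $\lambda_a$, with every $\rho_b$ (hence with every $\rho_b^{-1}$), and satisfies $\lambda_{\psi(a)}=\lambda_a$ and $\rho_{\psi(a)}=\rho_a$. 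The same holds for $\varphi$. Next I would observe that $\varphi$ disappears from the right-hand side: in Definition~\ref{D:k-sol} the reflection enters only through subexpressions of the form $\rho_{m(\cdot)}$, and since $\rho_{\varphi(x)}=\rho_x$ for all $x$ we get $\rho_{\varphi k\psi(\cdot)}=\rho_{k\psi(\cdot)}$, whence $r^{(\varphi k\psi)}=r^{(k\psi)}$. It therefore suffices to prove $r^{(k)}(\psi\times\psi)=(\psi\times\psi)\,r^{(k\psi)}$.

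The final step is the direct check, done coordinate by coordinate. Write $c=\rho_{k\psi(b)}^{-1}(a)$. Using $\rho_{\psi(b)}=\rho_b$ and the commutation of $\psi$ with the $\rho$'s, the second coordinate of $r^{(k)}(\psi(a),\psi(b))$ is $\rho_b\rho_{k\psi(b)}^{-1}(\psi(a))=\psi(\rho_b(c))$, which is precisely $\psi$ applied to the second coordinate $\tilde a'=\rho_b\rho_{k\psi(b)}^{-1}(a)=\rho_b(c)$ of $r^{(k\psi)}(a,b)$. For the first coordinate, I would first note that the identity $a'=\psi(\tilde a')$ just obtained forces $k(a')=k\psi(\tilde a')$, hence $\rho_{k(a')}=\rho_{k\psi(\tilde a')}$; then, rewriting $\rho_{k\psi(b)}^{-1}(\psi(a))=\psi(c)$ and applying $\lambda_{\psi(c)}=\lambda_c$ together with the commutation of $\psi$ with $\lambda_c$ and with $\rho_{k(a')}$, the first coordinate of $r^{(k)}(\psi(a),\psi(b))$ becomes $\psi\bigl(\rho_{k(a')}\lambda_c(b)\bigr)=\psi\bigl(\rho_{k\psi(\tilde a')}\lambda_c(b)\bigr)$, which is $\psi$ applied to the first coordinate of $r^{(k\psi)}(a,b)$.

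The computation is routine once the dual relations are in hand; the only bookkeeping subtlety is that the first-coordinate identity genuinely relies on the second-coordinate identity, through $\rho_{k(a')}=\rho_{k\psi(\tilde a')}$, so the two coordinates must be treated in this order. The real content, and the step that keeps the formulas manageable, is twofold: recognizing that $\varphi$ drops out entirely because of \eqref{E:rho_commute}, and correctly extracting the dual commutation relations $\psi\rho_b=\rho_b\psi$ and $\lambda_{\psi(a)}=\lambda_a$ from involutivity rather than from the defining equations alone.
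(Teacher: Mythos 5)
Your proof is correct and follows essentially the same route as the paper's: derive the dual relations $\psi\rho_b=\rho_b\psi$ and $\lambda_{\psi(a)}=\lambda_a$ from involutivity as in Proposition~\ref{P:InvolEqClasses}, then verify the two coordinates of Definition~\ref{D:k-sol} directly, using $a'=\psi(\tilde a')$ to handle $\rho_{k(a')}$. The only cosmetic difference is that you discard $\varphi$ up front via $r^{(\varphi k\psi)}=r^{(k\psi)}$, whereas the paper reinserts $\varphi$ at the last step using the same identity $\rho_{\varphi(x)}=\rho_x$.
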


\begin{proof}
In the proof of Propositoin~\ref{P:InvolEqClasses}, we showed that \eqref{E:lambda_equivariant}--\eqref{E:rho_commute} for $\psi$ imply $r\psi _1=\psi _2r$, hence $\psi \rho_a = \rho_a \psi$ and $\lambda_{\psi(a)}=\lambda_a$ for all $a \in X$. Now, recalling the definition of $r^{(k)}$, one has
\begin{align*}
r^{(k)} (\psi \times \psi) \colon (a,b) \longmapsto &(b'= \rho_{k(a')}\lambda_{\rho_{k\psi(b)}^{-1}\psi(a)}\psi(b)\ ,\ a'=  \rho_{\psi(b)}\rho_{k\psi(b)}^{-1}\psi(a))\\
&=(b' = \psi\rho_{k(a')}\lambda_{\psi\rho_{k\psi(b)}^{-1}(a)}(b)\ ,\ a'=  \psi\rho_{b}\rho_{k\psi(b)}^{-1}(a))\\
&=(b' = \psi\rho_{k(a')}\lambda_{\rho_{k\psi(b)}^{-1}(a)}(b)\ ,\ a'=  \psi\rho_{b}\rho_{k\psi(b)}^{-1}(a))\\
&= (b'= \psi \rho_{\varphi k \psi(a'')}\lambda_{\rho_{\varphi k \psi(b)}^{-1}(a)}(b)\ ,\ a'= \psi(a'')) \\
& \hspace*{3cm} \text{ where } a''= \rho_b\rho_{\varphi k \psi(b)}^{-1}(a)\\
&=(\psi \times \psi) r^{(\varphi k \psi)}(a,b). \qedhere
\end{align*}
\end{proof}


Finally, we unveil an unexpected relation between reflections and another algebraic structure.  Let $\Refl_{(X,r)}$ be the set of all reflections for $(X,r)$. Let $\Strange_X$ be the set of all \emph{strange} binary operations~$\ast$ on a set~$X$, that is, operations satisfying
\[(a \ast b) \ast a = b \ast a \qquad \text{ for all } a,b \in X.\]

\begin{thm}\label{T:Strange}
For any LND involutive YBE solution $(X,r)$, one has an injection
\begin{align*}
\Refl_{(X,r)} &\hookrightarrow \Strange_X,\\
k &\mapsto (b \klop a= \lambda_b k \lambda_b^{-1} (a)).
\end{align*}
Moreover, $k$ is a reflection if and only if the operation $\klop$ is strange.
\end{thm}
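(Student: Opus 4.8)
The plan is to exploit Theorem~\ref{T:InvolHalf}\ref{I:REinvol1}, which reduces the reflection equation for an LND involutive solution to the single relation \eqref{E:refl}, namely $t(a,k(b))=t(a,b)$ with $t(a,b)=\lambda_{\lambda_a(b)}k\rho_b(a)$. First I would simplify $t$ using involutivity: since $r$ is involutive, $\lambda_{\lambda_a(b)}\rho_b(a)=a$, so the map $t$ records how inserting $k$ before applying $\lambda_{\lambda_a(b)}$ perturbs this. The key move is to change variables. Writing $b'=\lambda_a(b)$ and using that $\lambda_a$ is a bijection (LND), the relation \eqref{E:refl} should become a statement purely about the operation $b\klop a=\lambda_bk\lambda_b^{-1}(a)$ evaluated at suitable arguments. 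I would aim to show directly that $t(a,b)$ can be expressed through $\klop$ in such a way that $t(a,k(b))=t(a,b)$ literally unwinds to the strange identity $(x\ast y)\ast x=y\ast x$.

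Concretely, the steps I would carry out are as follows. First I would establish the definedness and well-posedness of the map $k\mapsto(b\klop a=\lambda_bk\lambda_b^{-1}(a))$, noting that $\klop$ is genuinely a binary operation on $X$ since each $\lambda_b$ is invertible. Second, I would prove injectivity: from the operation $\klop$ one recovers $k$ by setting $b$ to a fixed or varying value and solving $k=\lambda_b^{-1}(\,\cdot\,\klop\lambda_b(\,\cdot\,))\lambda_b$; more simply, taking $b$ such that $\lambda_b=\Id$ if available, or otherwise observing that $k(a)=\lambda_b^{-1}(b\klop\lambda_b(a))$ for any single $b$, so the whole operation even overdetermines $k$ and distinct $k$'s give distinct $\klop$'s. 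Third, and this is the heart, I would prove the equivalence ``$k$ is a reflection $\iff$ $\klop$ is strange'' by translating \eqref{E:refl} into the strange identity through the substitution above, checking that each algebraic step is reversible so that the implication runs both ways.

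For the main computational equivalence I would proceed graphically, in the spirit of the rest of the paper, or by the following substitution. The strange identity $(x\klop y)\klop x=y\klop x$ expands, via $u\klop v=\lambda_uk\lambda_u^{-1}(v)$, into $\lambda_{x\klop y}k\lambda_{x\klop y}^{-1}\lambda_xk\lambda_x^{-1}(y)=\lambda_xk\lambda_x^{-1}(y)$. I would set $a$ and $b$ so that $x=\lambda_a(b)$-type expressions match the arguments appearing in $t$, using involutivity to simplify $\lambda_{x\klop y}^{-1}\lambda_x$ and $\lambda_x^{-1}(y)$. The target is to see that after this dictionary the left-hand side of the strange identity is exactly $t(a,k(b))$ and the right-hand side is exactly $t(a,b)$ (up to applying a common bijection $\lambda$ to both sides, which is harmless). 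Because every substitution uses only the bijectivity of the $\lambda$'s and the involutivity relation $\lambda_{\lambda_a(b)}\rho_b(a)=a$, and each is reversible, the equivalence follows in both directions, and Theorem~\ref{T:InvolHalf}\ref{I:REinvol1} upgrades ``$\eqref{E:refl}$ holds for all $a,b$'' to ``$k$ is a reflection''.

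The hard part will be getting the change of variables exactly right so that the arguments of $t$ and the arguments of $\klop$ line up, and confirming that the extra $\lambda$-factor floating on both sides can be cancelled legitimately (which it can, since $\lambda$'s are bijective). A subtle point to watch is that the strange identity must hold for \emph{all} $x,y\in X$, so I must ensure the substitution $x=x(a,b)$, $y=y(a,b)$ is surjective onto $X\times X$ as $(a,b)$ ranges over $X^2$; the non-degeneracy (bijectivity of the relevant $\lambda$'s) should guarantee this, but it is the place where I would be most careful. Once the dictionary is pinned down, the equivalence and the injectivity together give the claimed injection $\Refl_{(X,r)}\hookrightarrow\Strange_X$ with image exactly the strange operations arising from reflections.
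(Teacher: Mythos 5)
Your proposal is correct and is essentially the paper's own argument recast in algebraic rather than diagrammatic form: the substitution $y=\lambda_a(b)$ together with the involutivity identity $\rho_b(a)=\lambda_{\lambda_a(b)}^{-1}(a)$ gives $t(a,b)=\lambda_a(b)\klop a$ and $t(a,k(b))=(a\klop \lambda_a(b))\klop a$, so \eqref{E:refl} becomes exactly the strange axiom (with no stray $\lambda$-factor to cancel), and left-non-degeneracy makes the substitution surjective, as you anticipated. The only cosmetic difference is that you route both implications through Theorem~\ref{T:InvolHalf}(1), whereas the paper derives the forward implication directly from the reflection equation via an operator/diagram computation and invokes Theorem~\ref{T:InvolHalf} only for the converse.
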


The operation $\klop$ above is a particular case of that from Definition~\ref{D:kops}.

Thus to check if $k$ is a reflection, it suffices to check the relation
\[\lambda_{\lambda_a k \lambda_a^{-1} (b)} k \lambda_{\lambda_a k \lambda_a^{-1} (b)}^{-1} (a) \ = \ \lambda_b k \lambda_b^{-1} (a).\]
Contrary to the alternative one-relation criteria \eqref{E:reflExplicit} and \eqref{E:refl2Explicit} from Theorem~\ref{T:InvolHalf}, it involves only the $\lambda$'s, and no $\rho$'s.

The remarkable feature of our injection is the independence of the set $\Strange_X$ from the map $r$. We identify reflections with strange operations $\ast$ such that the maps $\lambda_b^{-1}l_b\lambda_b$ do not depend on~$b$. Here $l_b$ is the left translation for $\ast$: $l_b(a)=b\ast a$. 

Using constraint programming, we computed the size of $\Strange_n$ for $n\leq 5$ (Table~\ref{Tab}). For $n=5$, the calculation took about 15 minutes. The size of $\Strange_6$ is expected to be huge.

\begin{table}[h]
\begin{tabular}{|c|c|c|c|c|c|}
\hline
$n$ & 1 & 2 & 3 & 4 & 5\tabularnewline
\hline 
$\vert Strange_{n} \vert$ & 1 & 4 & 44 & 4055 & 5589052\tabularnewline
\hline
\end{tabular}
\caption{Number of strange binary operations.}\label{Tab}
\end{table}

\begin{proof}[Proof of Theorem~\ref{T:Strange}]
The map $k$ can be uniquely reconstructed from $\klop$ (which in this proof we will write as $\ast$ for simplicity) and the $\lambda_b$'s, hence the injectivity. It remains to show that the proposed operation $\ast$ is indeed strange.

Due to the RE and the involutivity of $r$, one has the following equality of operators on $X^2$:
\[k_2rk_2r=rk_2rk_2=rk_2rk_2rr.\]
Let us now apply these operators to $(a,\lambda_a^{-1}(b))$. Since we are interested in what happens to the first component of $X^2$ only, we will write $\bullet$ in the second position. Applying $k_2rk_2r$, one obtains
\begin{align*}
(a,\lambda_a^{-1}(b)) &\overset{r}{\longmapsto} (b,\bullet) \overset{k_2}{\longmapsto} (b,\bullet) \overset{r}{\longmapsto} (b \ast a,\bullet)\overset{k_2}{\longmapsto} (b \ast a,\bullet).
\end{align*} 
Applying $rk_2rk_2rr$, one gets
\begin{align*}
(a,\lambda_a^{-1}(b)) &\overset{r}{\longmapsto} (b,\bullet) \overset{r}{\longmapsto} (a,\bullet)\overset{k_2}{\longmapsto} (a,\bullet) \overset{r}{\longmapsto} (a \ast b,\bullet) \overset{rk_2}{\longmapsto} ((a \ast b) \ast a,\bullet).
\end{align*} 
Hence $(a \ast b) \ast a = b \ast a$, as desired.

To prove the last statement, assume that $\klop$ is strange. The computations above show that the maps $k_2rk_2r$ and $rk_2rk_2=rk_2rk_2rr$ then have the same first component. But then, according to Theorem~\ref{T:InvolHalf}, these maps have to coincide.
\end{proof}

\begin{exa}\label{EX:3ter}
Let us resume Example~\ref{EX:3}. Recall that we have an involutive non-degenerate solution with $5$ reflections. The associated strange operations are
\[b \klop a = \begin{cases} k(a) & \text{ if } k= 123, 213 \text{ or } 333,\\
\lambda_b k(a) & \text{ if } k= 113 \text{ or } 223.\end{cases}\]
In other words, $b \klop a$ is always $k(a)$ except when $k= 113 \text{ or } 223$, $b=3$, and $a \neq 3$.
\end{exa}

\begin{rem}\label{R:VeryStrange}
The proof of Theorem~\ref{T:Strange} can be adapted to show that for any invertible LND solution, the reflection equation is equivalent to the following two conditions for all $a,b \in X$:
\[\begin{cases} (b \klop a) \klop b = (b \lop a) \klop b,\\
((b \klop a) \klop b) \lop  (b \klop a)= ((b \lop a) \klop b) \klop (b \lop a).
\end{cases}\]
Here $\lop$ is the left structure shelf operation for $(X,r)$.
Using notations
\begin{align*}
b \rightharpoonup a &= b \klop (b \wlop a) = \lambda_b k \lambda^{-1}_b (a), \\
 b \rightharpoondown a &= b \wlop (b \klop a) = \rho^{-1}_{\lambda_a^{-1}(b)} k\rho_{\lambda_a^{-1}(b)}(a),
\end{align*}
where $\wlop$ is the left-inverse of $\lop$, in the sense of
\[b \lop (b \wlop a) = b \wlop (b \lop a)=a,\]
the above system can be rewritten in a more compact form:
\[\begin{cases} (b \rightharpoonup a) \klop b = a \klop b,\\
(a \klop b) \rightharpoondown a = b \rightharpoonup a.
\end{cases}\]
For an involutive solution, the left structure shelf is trivial: $a \lop b = b$. Hence the operations $\rightharpoondown$, $\rightharpoonup$, and $\klop$ coincide, and both relations in the above system coincide with the strange axiom.
\end{rem}

\subsection*{Acknowledgments}
The authors are grateful to the reviewer for constructive suggestions and remarks. They are thankful to the organisers of the 2019 conference \textit{Groups, Rings and Associated Structures} in Spa, where the project was born.

\bibliographystyle{alpha}
\bibliography{refs}
\end{document}